\documentclass[11pt]{amsart}
\usepackage{amsmath,amssymb,txfonts,mathtools}
\usepackage{enumitem,microtype,needspace}
\usepackage[colorlinks=true,linkcolor=blue,citecolor=blue,urlcolor=blue]{hyperref}
\usepackage[nameinlink,capitalise,noabbrev]{cleveref}
\allowdisplaybreaks[1]
\numberwithin{equation}{section}
\theoremstyle{plain}
\newtheorem{theorem}{Theorem}[section]
\newtheorem{proposition}[theorem]{Proposition}
\newtheorem{lemma}[theorem]{Lemma}
\newtheorem{corollary}[theorem]{Corollary}

\newtheorem{remark}[theorem]{Remark}
\newcommand{\Ric}{\operatorname{Ric}}
\newcommand{\Sc}{\operatorname{R}}

\newcommand{\diam}{\operatorname{diam}}
\newcommand{\Area}{\operatorname{Area}}
\newcommand{\Vol}{\operatorname{Vol}}
\newcommand{\dd}{\,\mathrm d}
\newcommand{\Sph}{\mathbb S}
\newcommand{\R}{\mathbb R}

\newcommand{\Hess}{\nabla^2}
\newcommand{\II}{\mathrm{II}}
\crefname{theorem}{Theorem}{Theorems}
\crefname{proposition}{Proposition}{Propositions}
\crefname{lemma}{Lemma}{Lemmas}
\crefname{corollary}{Corollary}{Corollaries}
\crefname{definition}{Definition}{Definitions}

\hypersetup{pdftitle={Integral of Scalar Curvature under Volume Collapse},pdfauthor={Guoyi Xu}}
\begin{document}

\title[Integrals of scalar curvature]{Integral of Scalar Curvature under Volume Collapse}
\subjclass[2020]{Primary 53C21; Secondary 53C20}
\keywords{Ricci curvature, scalar curvature, warped metrics, volume collapse}
\author{Guoyi Xu}
\address{Guoyi Xu\\ Department of Mathematical Sciences\\Tsinghua University, Beijing\\P. R. China}
\email{guoyixu@tsinghua.edu.cn}
\date{September 7, 2026}

\begin{abstract}
We construct a smooth complete metric on $\R^3$ with strictly positive
Ricci curvature and $\displaystyle \lim_{s\rightarrow\infty}s^{-1}\int_{B_p(s)}\Sc\dd\mu= \infty$. This gives a negative answer to Shing-Tung Yau's
question on the asymptotic scalar-curvature integral. We also give  counterexamples to Naber's local Shing-Tung Yau conjecture on
compact manifolds. The noncompact example has zero asymptotic volume ratio, and the compact counterexamples have volumes tending to zero.
\end{abstract}

\thanks{Guoyi Xu was partially supported by NSFC 12141103.}
\maketitle
\tableofcontents

\section{Introduction}
\label{sec:introduction}

We consider the boundedness question for the scalar-curvature case of
Shing-Tung Yau's problem \cite[Problem~9]{Yau1992}, as formulated in
\cite[Question~1.1]{Yang2013}; see also the finite-limit formulation
\cite[Question~1.3]{Xu2024}. The boundedness assertion is that every fixed complete
noncompact $(M^n,g)$ with $\Ric_g\geq0$ satisfies, at every $p\in M$,
\begin{align}
 \varlimsup_{r\to\infty} r^{2-n}
       \int_{B_p^g(r)}\Sc_g\dd\mu_g<\infty.
 \label{eq:asym-three-unrestricted-question}
\end{align}

The local assertion is Naber's conjecture \cite[Conjecture~3.5]{Naber2020} as follows:
\begin{align}
 \Ric_g(M^n)\geq-\Lambda g
 \quad\Longrightarrow\quad
 \int_{B_p^g(1)}\Sc_g\dd\mu_g\leq C(n,\Lambda),
 \qquad \Lambda\geq0,
 \label{eq:intro-Naber-local}
\end{align}

For a complete noncompact three-manifold with $\Ric_g\geq0$, define
\begin{align}
 \operatorname{AVR}(g):=
 \lim_{r\to\infty}\frac{\Vol_g B_p^g(r)}{(4\pi/3)r^3}.
 \label{eq:intro-AVR}
\end{align}
 A point $o$ is a pole if
$\exp_o^g:T_oM\to M$ is a diffeomorphism. Under the pole assumption, Bo Zhu
\cite[Theorem~1.7]{Zhu2022geometry} proved that the limit superior
in \eqref{eq:asym-three-unrestricted-question} is finite in dimension
three.
Guoyi Xu \cite[Theorem~1.4]{Xu2024} subsequently obtained the exact formula
\begin{align}
 \lim_{r\to\infty}\frac1r
       \int_{B_p^g(r)}\Sc_g\dd\mu_g
 =8\pi\bigl(1-\operatorname{AVR}(g)\bigr),
 \qquad p\in M,
 \label{eq:intro-pole-integral}
\end{align}
for every complete three-manifold with $\Ric_g\geq0$ admitting a pole.

There are also estimates with a Green-function weight. Let $(M^3,g)$ be complete and noncompact with $\Ric_g\geq0$.
Suppose that the minimal positive Green function $G_p$ exists, with
$-\Delta_gG_p=\delta_p$, where
$\Delta_g=\operatorname{div}_g\nabla$ and $\delta_p$ is the unit mass
at $p$; this is the non-parabolic case. Put
$b_p=(4\pi G_p)^{-1}$ on $M\setminus\{p\}$ and $b_p(p)=0$.
Guoyi Xu \cite[Theorem~1.2]{Xu2020} proved
\begin{align}
 \limsup_{t\to\infty}\frac1t
       \int_{\{b_p\leq t\}}\Sc_g|\nabla b_p|_g\dd\mu_g
 \leq8\pi\bigl(1-\operatorname{AVR}(g)\bigr).
 \label{eq:intro-Green-integral}
\end{align}
Related Green-function comparison and scalar-curvature integral
estimates were developed by Bo Zhu \cite{Zhu2022comparison}.
Zixuan Chen, Guoyi Xu and Shuai Zhang \cite[Theorem~1.5]{ChenXuZhang2026} obtained
existence of the limit and equality in \eqref{eq:intro-Green-integral} when
$\operatorname{AVR}(g)>0$. 

For a complete connected noncompact three-manifold with
$\Ric_g\geq0$, Munteanu and Jiaping Wang
\cite[Theorem~1.2]{MunteanuWang2025}
proved
\begin{align}
 \varlimsup_{r\to\infty}\frac1r
       \int_{B_p^g(r)}\Sc_g\dd\mu_g\leq8\pi
 \label{eq:intro-Munteanu-Wang}
\end{align}
under the assumptions $0<c_0\leq\Sc_g\leq C_0<\infty$
and that $M^3$ has exactly one end.

Shiguang Ma \cite[Theorem~1.3]{Ma2026} obtained explicit asymptotic
scalar-curvature integral formulas for complete noncompact locally
conformally flat manifolds of dimension $n\geq3$ with
$\Ric_g\geq0$; in particular, the limit in
\eqref{eq:asym-three-unrestricted-question} is finite in this class.
In dimension three, Jialong Deng \cite[Theorem~B(i)]{Deng2026} proved that
if $(M^3,g)$ is complete, connected, noncompact, oriented and nonflat,
$\Ric_g\geq0$, and there are $C_0,r_0>0$ such that
\begin{align}
 \Sc_g(x)\leq C_0d_g(p,x)^{-2}
 \quad\text{whenever }d_g(p,x)\geq r_0,
 \label{eq:intro-quadratic-decay}
\end{align}
then the limit in
\eqref{eq:asym-three-unrestricted-question} is zero when
$\operatorname{AVR}(g)=0$, and is at most
$8\pi(1-\operatorname{AVR}(g))$ when $\operatorname{AVR}(g)>0$.

Related scalar-curvature integral results are available in the
K\"ahler setting.
Gang Liu \cite[Theorem~2]{Liu2024} proved that, for every complete
noncompact K\"ahler manifold with nonnegative holomorphic
bisectional curvature,
\begin{align}
 \lim_{r\to\infty}\frac{r^2}{\Vol_gB_p^g(r)}
       \int_{B_p^g(r)}\Sc_g\dd\mu_g
 \label{eq:intro-Liu-limit}
\end{align}
exists in $[0,+\infty]$.

Gang Liu and Sz\'ekelyhidi \cite[Proposition~1.7]{LiuSzekelyhidi2022} proved a local bound on polarized K\"ahler manifolds of complex
dimension $m$: if $\Ric_g>-g$ and
$\Vol_gB_p^g(1)>v>0$, then
\begin{align}
 \int_{B_p^g(1)}\Sc_g\dd\mu_g\leq C(m,v).
 \label{eq:intro-Liu-Szekelyhidi}
\end{align}

The following estimate is due to Wenshuai Jiang and Naber
, which was announced in
\cite[Theorem~2.17]{Naber2020} and restated in  
\cite[Theorem~1.7]{CucinottaMondino2026} by Cucinotta and Mondino, whose Appendix sketches
a proof: for $n\geq2$, $\Lambda\geq0$, $v>0$
and $s\in(0,1)$, every complete $(M^n,g,p)$ with
$\Ric_g\geq-\Lambda g$ and $\Vol_gB_p^g(1)\geq v$ satisfies
\begin{align}
 \frac{1}{\Vol_g B_p^g(1)}
       \int_{B_p^g(1)}|\Ric_g|_g^s\dd\mu_g
 \leq C(n,\Lambda,v,s).
 \label{eq:intro-Jiang-Naber}
\end{align}

The main result of this paper is the following result.
\begin{theorem}\label{thm:main}
There exists a smooth complete metric $g$ on $\R^3$ with
$\Ric_g>0$ such that
\begin{align}
 \lim_{s\to\infty}\frac1s\int_{B_p^g(s)}\Sc_g\dd\mu_g=+\infty
 \qquad \forall p\in\R^3.
 \nonumber 
\end{align}
There exists a sequence of smooth metrics $g_j^{\mathrm c}$ on
$\Sph^3$ such that
\begin{align}
 \Ric_{g_j^{\mathrm c}}>0,\qquad
 \diam(\Sph^3,g_j^{\mathrm c})<\tfrac12,\qquad
 \lim_{j\to\infty}\int_{\Sph^3}
       \Sc_{g_j^{\mathrm c}}\dd\mu_{g_j^{\mathrm c}}=+\infty.
 \nonumber 
\end{align}
\end{theorem}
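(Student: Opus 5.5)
We will construct doubly warped metrics on $\R^3$ of the form
\begin{align}
 g=\dd r^2+a(r)^2\dd\theta^2+b(r)^2\dd\phi^2 \nonumber
\end{align}
on $\R^2\times\Sph^1$-type models. More precisely, we take $\R^3$ as $[0,\infty)\times\Sph^1\times\R$ with warped products $g=\dd r^2+f(r)^2\dd\theta^2+h(r)^2\dd t^2$, and then cut and cap. The guiding heuristic is that in a rotationally symmetric metric $\dd r^2+f(r)^2 g_{\Sph^2}$, the scalar curvature integral over $B(s)$ is essentially $\int_0^s(2(1-f'^2)-4ff'')\cdot 4\pi\dd r\approx 8\pi s$. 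This is why the pole formula \eqref{eq:intro-pole-integral} caps the growth, and it shows that the divergence must come from a non-pole geometry.

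The mechanism we aim for is volume collapse along a circle direction. We consider $g=\dd r^2+f(r)^2g_{\Sph^2}$ twisted into a metric in which the two-dimensional cross-sections carry a huge total curvature relative to their size. Concretely, we use the Berger-type metric
\begin{align}
 g=\dd r^2+f(r)^2\bigl(\sigma_1^2+\sigma_2^2\bigr)+h(r)^2\sigma_3^2 \nonumber
\end{align}
on $\R^4$. In dimension three the analogue is $\dd r^2+f(r)^2\dd\theta^2+h(r)^2\dd z^2$ with the $z$-direction compactified near the axis. For such metrics the Ricci curvatures are $-f''/f-h''/h$, $-f''/f-f'h'/(fh)$ and $-h''/h-f'h'/(fh)$. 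The scalar curvature density $\Sc\, fh$ is integrated against $\dd r$ over ever-larger $\theta$-ranges.

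The plan is as follows.
\begin{enumerate}[label=(\arabic*)]
 \item Choose profile functions, concave and increasing, with $f''<0$ and $h''<0$, so that $\Ric>0$ is immediate from the formulas above. Choose them so that $\int_0^s\Sc\,fh\,\dd r$ grows superlinearly. Since $\Sc\,fh\approx -2(f''h+h''f+f'h')$, we want $f'h'$ to remain large while $f$ and $h$ themselves collapse relative to $r$.
 \item Smooth the profiles at the axis $r=0$, with $f(0)=0$, $f'(0)=1$ and $h$ even, so that $g$ is smooth on $\R^3$.
 \item Verify completeness, and check that replacing the base point by an arbitrary $p$ changes $B_p(s)$ only by $s\pm C$, which does not affect a superlinear limit.
 \item For $\Sph^3$, truncate the noncompact example at scale $R_j$, cap it off smoothly keeping $\Ric>0$, and rescale to diameter less than $\tfrac12$. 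Under rescaling by $\lambda$, $\int\Sc\dd\mu$ scales like $\lambda$, so a superlinear growth $\int_{B(R_j)}\Sc\gg R_j$ survives the rescaling $\lambda\sim R_j^{-1}$ and tends to $+\infty$.
\end{enumerate}

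The main obstacle is step (1): reconciling $\Ric>0$ with superlinear growth of $\int\Sc$, since volume comparison pushes against it. We would first try a multi-scale design in which $h$ undergoes infinitely many concave bends at radii $r_k$. Each bend is a short region where $h''\ll0$, contributing a fixed or growing amount of $\int\Sc$ because the collapse of the $\theta$-circle makes such bends cheap in diameter. We would then tune $r_k$ and the bend sizes so that the contributions sum faster than $s$.

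The obvious difficulty is that concavity caps the total decrease of $h'$. So the construction would likely need the warped circle to be extremely short, while a second direction grows, so that each unit of bending yields unbounded scalar-curvature mass. If the two-warping ansatz turns out to be too rigid, we would pass to a non-diagonal, twisted (Berger-like) fibration, where the $-\tfrac14 h^2/f^4$-type term of positive fibre curvature can be exploited.
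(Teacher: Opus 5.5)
Step (1) is the entire content of the theorem, and your proposal never carries it out. Worse, the ansatz you suggest for it cannot work. A metric $\dd r^2+f(r)^2\dd\theta^2+h(r)^2\dd t^2$ lives on $\R^3$ only with $t\in\R$: compactifying $t$ gives $\R^2\times\Sph^1$, and the Berger metric is four-dimensional.

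For this metric, $\Ric(\partial_t,\partial_t)\ge0$ says exactly that $h$ is a positive superharmonic function on the base surface $(\R^2,\dd r^2+f^2\dd\theta^2)$. For $o$ on the axis, $B_o(s)$ contains $\{r\le s/3,\ |t|\le s/(3h(r))\}$, whose volume is $\frac{4\pi s}{3}\int_0^{s/3}f\dd r$. Bishop--Gromov then gives $\int_0^Rf\dd r\le 9R^2$. So the base has quadratic area growth and is parabolic, which forces $h$ to be constant. The metric is then a product of a surface with $K\ge0$ and a line, and Cohn-Vossen gives $\int_{B(s)}\Sc\dd\mu\le8\pi s$. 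The version with $\Sph^2$-orbits is Lemma~\ref{lem:rotational-obstruction}.

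Hence no choice of profiles, multi-scale bends included, yields superlinear growth within your cohomogeneity-one framework. There are also two smaller errors. First, $\Sc fh=-2(f''h+fh'')-2f'h'$, so making $f'h'$ large lowers the integrand rather than raising it. Second, concavity of increasing $f,h$ does not make $-f''/f-f'h'/(fh)$ positive, so $\Ric>0$ is not immediate.

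The idea you are missing is the one driving Section~\ref{sec:log-construction}. When the $\dd r^2$-coefficient depends on $r$ only, Gauss--Bonnet fixes the weighted cross-sectional curvature at $4\pi$ per unit of normal distance (Remark~\ref{remark:why-N-must-depends-on-r-and-t}). The paper therefore passes to an $\Sph^1$-invariant, cohomogeneity-two metric $N(r,t)^2\dd r^2+a^2\dd t^2+b^2\dd\phi^2$ on $[1,\infty)\times\Sph^2$. Its cross-sections have curvature concentrating at the poles, where $q(r,0)=r^{1/4}\to\infty$, and its lapse $N=N(q)$ increases exactly there. This gives $\int_{\Sigma_r}NK_r\dd A_r\ge c\,r^{1/32}$, and with $\Sc\ge K_r$ this yields superlinear growth. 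Positivity of $\Ric$ is checked by the explicit estimates of Step~(7), and the interior is filled by a ball attached with Perelman's gluing theorem.

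Your step (4) also needs a mechanism. ``Cap it off smoothly keeping $\Ric>0$'' is not automatic. The paper uses that the truncations $\Omega_T$ have $\II_{\Sigma_T}>0$, doubles them, and smooths by Perelman's theorem with $C^0$ control to get $\diam\le CT$. Only then does it rescale.
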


Theorem~\ref{thm:main} gives a negative answer to Shing-Tung Yau's question, and disproves Naber's conjecture for complete non-compact and compact ambient manifolds. 

Note our examples do not conflict the assertion
\cite[Conjecture~2.18]{Naber2020}, which assumes a fixed
positive lower bound for the unit-ball volume.

Our construction modifies the metric form
$g=dr^2+a(r,t)^2dt^2+b(r,t)^2d\phi^2$
used in Perelman's construction on a finite cylinder
\cite[Section~2, p.~159]{Perelman1997Betti}
by allowing the coefficient of $dr^2$ to depend on both $r$ and $t$.
We construct
\begin{align}
 g_E
 &=N(r,t)^2dr^2+a(r,t)^2dt^2+b(r,t)^2d\phi^2,
 \qquad E:=[1,\infty)\times\mathbb{S}^2,
 \label{eq:general-cylinder}
\end{align}
with strictly positive Ricci curvature.
After matching the boundary metrics and verifying the required
inequality for the second fundamental forms, we apply Perelman's
gluing theorem to attach a closed three-ball with strictly positive
Ricci curvature along $\partial E$.
The resulting smooth complete metric $g$ on $\mathbb{R}^3$ satisfies
\begin{align*}
 \lim_{s\to\infty}\frac1s
 \int_{B_p^g(s)}\Sc_g\,d\mu_g=+\infty
 \qquad\text{for every }p\in\mathbb{R}^3.
\end{align*}

Instead of giving the explicit metric $g_E$ at the beginning and just verifying the conclusion afterwards, we derives the choices of $b$, $a$, and $N$ successively, using the prescribed symmetries, the smoothness
conditions at the poles and equator, and explicit differential
inequalities ensuring $\Ric_{g_E}>0$.
We include these calculations to explain the choices of the
coefficients and the restrictions on their parameters (see Section~\ref{sec:log-construction} for details). We believe in this procedure has its own interest.

After completing this work, we became aware of the preprint
of Tianze Hao and Jintian Zhu~\cite{HaoZhu2026}, which also gives a counterexample
to Shing-Tung Yau's conjectured asymptotic scalar-curvature integral bound.
The present work was carried out independently of theirs.

\section{Construction of the outer cylinder}
\label{sec:log-construction}

The symbol $\Sph^k$ denotes the unit sphere in
$\R^{k+1}$; its round metric is denoted by $g_{\Sph^k}$.

The notation $\Vol_g(E)$ means $\int_E\dd\mu_g$, and
$\Area_g(\Sigma)$ means the area in the induced surface metric.
For a surface metric $h$, $K_h$ denotes its Gaussian curvature.

\textbf{(1). The rotationally symmetric case}.
\begin{lemma}\label{lem:rotational-obstruction}
Suppose $\dd s^2+f(s)^2g_{\Sph^2}$ is a smooth complete metric
on $\R^3$ with a pole $o$ at $s=0$, where $f>0$ on $(0,\infty)$,
$f(0)=0$, and $f'(0)=1$. If its Ricci tensor is nonnegative, then
\begin{align}
 \int_{B_o(S)}\Sc\dd\mu\leq16\pi S\qquad(S>0).
 \label{eq:rotational-obstruction}
\end{align}
\end{lemma}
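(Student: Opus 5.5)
We compute everything explicitly for the warped product $ds^2+f(s)^2g_{\Sph^2}$. The Ricci curvatures are
\begin{align*}
\Ric(\partial_s,\partial_s)=-2\frac{f''}{f},\qquad
\Ric(e,e)=\frac{1-f'^2}{f^2}-\frac{f''}{f}\quad(e\perp\partial_s,\ |e|=1),
\end{align*}
and the scalar curvature is
\begin{align*}
\Sc=-4\frac{f''}{f}+2\frac{1-f'^2}{f^2}.
\end{align*}
The volume form is $f^2\dd s\,\dd\mu_{\Sph^2}$, and $B_o(S)=\{s<S\}$ because $o$ is a pole. Therefore
\begin{align*}
\int_{B_o(S)}\Sc\dd\mu=4\pi\int_0^S\bigl(-4ff''+2(1-f'^2)\bigr)\dd s .
\end{align*}

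The first step is to use the hypothesis $\Ric\geq0$. It gives $f''\leq0$. Since $f'(0)=1$, $f'$ is nonincreasing, so $f'\leq1$. We also have $f'\geq0$: if $f'(s_1)<0$ at some point, then concavity would force $f$ to become negative in finite distance, contradicting $f>0$. Hence $0\leq f'\leq1$, so $0\leq1-f'^2\leq1$, and $-ff''\geq0$.

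The second step integrates the $ff''$ term by parts:
\begin{align*}
\int_0^S -ff''\dd s=-f(S)f'(S)+\int_0^S f'^2\dd s,
\end{align*}
using $f(0)=0$. Substituting this, the integrand combines to
\begin{align*}
\int_{B_o(S)}\Sc\dd\mu=4\pi\Bigl(-4f(S)f'(S)+\int_0^S\bigl(2+2f'^2\bigr)\dd s\Bigr).
\end{align*}
Since $f,f'\geq0$, the boundary term $-4f(S)f'(S)$ is nonpositive. Since $f'^2\leq1$, the remaining integral is at most $4S$. Together these give the bound $16\pi S$.

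No step is a real obstacle. The one point needing care is the sign of the boundary term, which rests on $f'\geq0$ and hence on the concavity argument above.
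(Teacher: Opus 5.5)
Your proof is correct and follows essentially the same route as the paper. In both, $f''\le 0$ together with concavity and $f>0$ gives $0\le f'\le 1$, and integrating $-4ff''$ by parts yields $\int_{B_o(S)}\Sc\dd\mu=-16\pi f(S)f'(S)+8\pi S+8\pi\int_0^S (f')^2\dd s\le 16\pi S$.
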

\begin{proof}
The radial Ricci component is $-2f''/f$, so $f''\leq0$ and
$f'\leq1$. A negative value of $f'$ would imply
$f(s)\leq f(s_0)+(s-s_0)f'(s_0)$ for $s\geq s_0$, contradicting
$f>0$. Thus $0\leq f'\leq1$. 

The scalar curvature formula and integration by parts give
\begin{align}
 \int_{B_o(S)}\Sc\dd\mu
 &=4\pi\int_0^S\{-4ff''+2(1-(f')^2)\}\dd s\nonumber\\
 &=-16\pi f(S)f'(S)+8\pi S+8\pi\int_0^S(f')^2\dd s
 \leq16\pi S.
\end{align}
The zero-end boundary term vanishes because $f(0)=0$.
\end{proof}

\textbf{(2). Choice of the cross-sectional metric}.  The preceding lemma shows that $g=dr^2+F(r)^2g_{\Sph^2}$ is too restrictive. For related constructions with nonnegative Ricci curvature in which
the cross-sectional metric varies with the radial parameter, see
Colding and Naber~\cite[Lemma~2.1 and its proof]{ColdingNaber2013}.

We seek smooth cross-sectional metrics
\begin{align*}
 h_r=\dd s^2+B_r(s)^2\dd\phi^2,
 \qquad 0\leq s\leq2\ell_r,
 \qquad \phi\in\R/(2\pi\mathbb Z),
\end{align*}
where $s$ is arclength from the north pole, $s=\ell_r$ is the
equator, and $B_r(2\ell_r-s)=B_r(s)$. Requiring $K_r>0$ gives
$B_r''=-K_rB_r<0$ between the poles. Since $B_r'(\ell_r)=0$,
$B_r$ is strictly monotone on each open hemisphere. Thus
\begin{align*}
 F(r):=B_r(\ell_r),\qquad t:=\frac{B_r(s)}{F(r)}
\end{align*}
defines a coordinate with pole value $0$ and equatorial value $1$.
In this coordinate,
\begin{align*}
 h_r=a(r,t)^2\dd t^2+F(r)^2t^2\dd\phi^2.
\end{align*}
This latitude coordinate is inspired by
\cite[Section~2]{Perelman1997Betti}.
We now prescribe $g=\dd r^2+h_r$ in the coordinates $(r,t,\phi)$.
For $F(r)=r^\alpha$, where $r\geq1$ and $0<\alpha<1$,
\begin{align*}
 \Ric_g(\partial_r,\partial_r)
 =-\frac{a_{rr}}a+\frac{\alpha(1-\alpha)}{r^2}.
\end{align*}

\textbf{Step (3). Definition of \(q\) and the pole condition}
Define
\begin{align}
 q(r,t):=\frac{r^2}{a(r,t)^2}.
 \label{eq:q-from-a}
\end{align}
Then
\begin{align}
 K_r=-\frac{q_t}{2r^2t}.
 \label{eq:Kr-from-q}
\end{align}
Smoothness at a pole requires $a(r,0)=r^\alpha$. Therefore, with
\begin{align}
 \lambda:=2-2\alpha,\qquad q(r,0)=r^\lambda,
 \label{eq:lambda-alpha}
\end{align}
the pole normalization is satisfied.

\textbf{Step (4). Introduction of $N$ and the weighted curvature integral.}
Write $\Sigma_r=\{r\}\times\Sph^2$, let $\dd A_r$ be its area
measure, and put $b=r^\alpha t$. We introduce
\begin{align}
 g=N(r,t)^2\dd r^2+h_r,
 \qquad N>0.
 \label{eq:introduce-N}
\end{align}
Since $\dd\mu_g=N\dd r\dd A_r$, the relevant cross-sectional
integral is $\int_{\Sigma_r}NK_r\dd A_r$, whereas
\begin{align}
 \int_{\Sigma_r}K_r\dd A_r=4\pi.
 \label{eq:why-N-Gauss-Bonnet}
\end{align}

\begin{remark}\label{remark:why-N-must-depends-on-r-and-t}
If $N=N(r)$, set $\widetilde r=\int_1^rN(\rho)\dd\rho$. Then
\begin{align*}
 \left(\int_{\Sigma_r}NK_r\dd A_r\right)\dd r
 =4\pi\dd\widetilde r.
\end{align*}
If $N=N(t)$, positivity of $K_r$ gives
\begin{align}
 \int_{\Sigma_r}N(t)K_r\dd A_r
 \leq4\pi\max_{0\leq t\leq1}N(t).
 \label{eq:N-t-only-no-growth}
\end{align}
These observations concern only this weighted integral; they do
not exclude other constructions with $N=N(r)$ or $N=N(t)$.
\end{remark}

We take $N=N(q(r,t))$ and impose $q(r,1)=0$, so that
$N(r,1)=N(0)$ is independent of $r$.

\textbf{Step (5). Sign conditions on \(q\) and $N$}
We require $q_t<0$. On either hemisphere,
$\dd A_r=r^{1+\alpha}tq^{-1/2}\dd t\dd\phi$.
Changing variables from $t$ to $q$ and adding the two hemispheres gives
\begin{align}
 \int_{\Sigma_r}NK_r\dd A_r
 =2\pi r^{-\lambda/2}
   \int_0^{r^\lambda}\frac{N(u)}{\sqrt u}\dd u.
 \label{eq:N-weighted-K-lambda}
\end{align}
If $N_q\leq0$, this integral is at most $4\pi N(0)$.
We therefore seek $N_q>0$.

\textbf{Step (5). Choice of $q$.}
Proposition~\ref{prop:two-variable-Ricci} gives
\begin{align}
 r^2N^2R_{00}
 &=-r^2\frac{a_{rr}}a+\alpha(1-\alpha)
 +\left(r\frac{a_r}{a}+\alpha\right)\frac{rN_r}{N}
 \nonumber\\
 &\quad
 -N\left\{
 qN_{tt}+\left(\frac{q_t}{2}+\frac qt\right)N_t
 \right\}.
 \label{eq:R00-complete-general}
\end{align}
Since $N=N(q)$,
\begin{align}
-\frac{qN_{tt}+(q_t/2+q/t)N_t}{Nr^2}=-\frac{qN_q}{Nr^2}\left(q_{tt}+\frac{q_t}t\right)-\frac{qN_{qq}+N_q/2}{Nr^2}q_t^2.
\end{align}
Thus $q_{tt}+q_t/t<0$ makes the first term on the right positive. The remaining terms and $R_{01}$ will be estimated below.

The equation $q_{tt}+q_t/t=0$, with $q_t<0$ and $q(1)=0$,
has solutions $q(t)=-c\log t$, $c>0$.
Taking $c=2$ gives the model $-\log t^2$.
To replace its infinite pole value by $r^\lambda$ while retaining
$q(r,1)=0$, prescribe the affine function of $t^2$
\begin{align}
 e^{-q(r,t)}=t^2+(1-t^2)e^{-r^\lambda}.
 \label{eq:q-exp-general}
\end{align}
Thus
\begin{align}
 q(r,t)=-\log\bigl(t^2+(1-t^2)e^{-r^\lambda}\bigr).
 \label{eq:q-general-form}
\end{align}

Direct differentiation gives
\begin{align}
 q_t
 &=
 -2t(1-e^{-r^\lambda})e^q<0,
 \label{eq:q-general-qt}\\
 q_{tt}+\frac{q_t}{t}
 &=
 -4(1-e^{-r^\lambda})
 e^{\,2q-r^\lambda}<0.
 \label{eq:q-general-qtt}
\end{align}
Therefore
\begin{align}
 r^2K_r
 =
 (1-e^{-r^\lambda})e^q>0.
 \label{eq:q-general-K}
\end{align}

The pole smoothness follows from
\begin{align}
 q(r,t)
 =
 r^\lambda
 -
 \log\left(
 1+(e^{r^\lambda}-1)t^2
 \right),
 \label{eq:q-pole-smooth}
\end{align}
and, writing
\[
v^2=1-t^2
\]
near the equator,
\begin{align}
 q(r,\sqrt{1-v^2})
 =
 -\log\left(
 1-(1-e^{-r^\lambda})v^2
 \right),
 \label{eq:q-equator-smooth}
\end{align}
so that
\[
\frac{q(r,\sqrt{1-v^2})}{v^2}
\longrightarrow
1-e^{-r^\lambda}>0.
\]

\textbf{Step (6). Choice of $N$.}
For $A,B>0$ and a smooth positive function
$F:[0,\infty)\to(0,\infty)$, set $N^2=A+BF(q)$.
Equations~\eqref{eq:q-general-qt}--\eqref{eq:q-general-K} give
\begin{align}
 &-\frac{qN_{tt}+(q_t/2+q/t)N_t}{Nr^2}
 \nonumber\\
 &\quad=\frac{BK_r}{N^2}
 \left\{2qe^{q-r^\lambda}F'(q)
 -(1-e^{q-r^\lambda})\bigl(F'(q)+2qF''(q)\bigr)\right\}
 \nonumber\\
 &\qquad\quad
 +\frac{B^2qK_r(1-e^{q-r^\lambda})F'(q)^2}{N^4}.
 \label{eq:N-choice-direct}
\end{align}
Thus $F'(q)>0$ and $F'(q)+2qF''(q)<0$ for all sufficiently
large $q$ make the right-hand side nonnegative there.
The equality equation has solutions $c_0+c_1\sqrt q$ on $q>0$,
where $c_0,c_1\in\R$. A nonzero $\sqrt q$ term is not smooth
at the equator, since
\begin{align*}
 \sqrt{q(r,\sqrt{1-v^2})}
 =\sqrt{1-e^{-r^\lambda}}\,|v|+O(|v|^3)
 \qquad(v\to0).
\end{align*}
For the smooth choice $F(q)=(1+q)^\beta$, with $\beta>0$,
\begin{align}
 F'(q)+2qF''(q)
 =\beta(1+q)^{\beta-2}\bigl(1+(2\beta-1)q\bigr).
 \label{eq:N-choice-beta}
\end{align}
The required eventual negative sign holds precisely for
$0<\beta<1/2$. We take $\beta=1/4$, giving
\begin{align}
 F'(q)+2qF''(q)=\frac{2-q}{8(1+q)^{7/4}}.
 \label{eq:N-choice-F-sign}
\end{align}
Consequently, the right-hand side of \eqref{eq:N-choice-direct}
is nonnegative for $q\geq2$ and is bounded below by
$-BK_r/(4N^2)$ for $0\leq q\leq2$.
We choose $A=16$ and $B=1/8$, so that
\begin{align}
 N^2&=16+\frac18(1+q)^{1/4},
 \label{eq:N-choice-final-form}\\
 N_q&=\frac1{64N(1+q)^{3/4}}.
 \label{eq:N-choice-Nq}
\end{align}
Here $N>4$, $N_t=N_q q_t$, and $N_r=N_q q_r$.
The following estimates verify that these choices suffice.

\textbf{Step (7). Choice of \(\lambda\).}
Throughout this step, \(r\geq1\), \(0<t<1\), and \(0<\lambda\leq1\).
Since
\begin{align*}
 \frac12
 \leq \frac{d}{dq}\log\frac{e^q-1}{q}
 =\frac1{1-e^{-q}}-\frac1q
 \leq1,
\end{align*}
direct differentiation gives
\begin{align}
 e^{q-r^\lambda}
 &\leq
 \frac{rq_r}{\lambda q}
 =\frac{r^\lambda(e^q-1)}{q(e^{r^\lambda}-1)}
 \leq e^{(q-r^\lambda)/2}\leq1,
 \label{eq:lambda-choice-xqx}\\
 \frac{1-e^{q-r^\lambda}}2
 &\leq1-\frac{rq_r}{\lambda q}
 \leq1-e^{q-r^\lambda},
 \qquad
 r^\lambda\frac{rq_r}{\lambda q}\leq\max\{2,q\}.
 \label{eq:lambda-choice-quotient-bounds}
\end{align}
Moreover,
\begin{align}
 q_{rr}
 &=q_r^2-\frac{1-\lambda+\lambda r^\lambda}{r}q_r,
 \nonumber\\
 \frac{r^2a_{rr}}a
 &=\frac{\lambda^2}{4}
 \left\{
 \left(2r^\lambda-2-\frac2\lambda\right)
 \frac{rq_r}{\lambda q}
 +(3-2q)\left(\frac{rq_r}{\lambda q}\right)^2
 \right\},
 \label{eq:lambda-choice-arr}\\
 -\frac{r^2b_{rr}}b
 &=\frac{\lambda(2-\lambda)}4,
 \qquad
 1-\frac{\lambda}{2}\leq\frac{ra_r}{a}\leq1.
 \label{eq:lambda-choice-brr}
\end{align}

Differentiating \eqref{eq:N-choice-Nq} and using
\eqref{eq:q-general-qt}--\eqref{eq:q-general-K}, we obtain
\begin{align*}
 N_{qq}
 &=-N_q\left(\frac{N_q}{N}+\frac{3}{4(1+q)}\right)<0,\\
 q q_{tt}+\frac{q_t^2}{2}
 &=2r^2K_r\left\{1+q-(1+2q)e^{q-r^\lambda}\right\}
 \leq2r^2K_r(1+q).
\end{align*}

Consequently,
\begin{align*}
 \frac{r^2a_{rr}}a
 &\leq\frac{\lambda^2}{2}\max\{2,q\}
 \leq2\lambda^2r^2K_r,\\
 \frac{qN_{tt}+q_tN_t/2}{r^2N}
 &\leq\left(\frac14-\frac4{N^2}\right)K_r,\\
 R_{11}
 &\geq
 \left(\frac34+\frac{2+\lambda-2\lambda^2}{N^2}\right)K_r
 \geq\frac34K_r,\\
 R_{22}
 &\geq\left(1-\frac{(2-\lambda)^2}{2N^2}\right)K_r>0.
\end{align*}

The mixed component and the derivative of \(N\) satisfy
\begin{align}
 R_{01}
 &=\frac{\lambda\sqrt q}{2Nr^2t}
 \left\{
 1-\frac{rq_r}{\lambda q}
 +\frac{2-\lambda}{\lambda}\frac{tN_t}{N}
 \right\},
 \label{eq:lambda-choice-R01}\\
 0\leq-\frac{tN_t}{N}
 &=\frac{1-e^{q-r^\lambda}}{32N^2(1+q)^{3/4}}
 \leq\frac{1-e^{q-r^\lambda}}{512}.
 \label{eq:radial-bound-use-lapse-control}
\end{align}
Thus, for \(\lambda\geq2/257\),
\begin{align*}
 0
 &\leq
 1-\frac{rq_r}{\lambda q}
 +\frac{2-\lambda}{\lambda}\frac{tN_t}{N}
 \leq1-\frac{rq_r}{\lambda q},\\
 \frac{4r^2N^2}{\lambda^2}\frac{R_{01}^2}{R_{11}}
 &\leq\frac{4q}{3}\left(1-\frac{rq_r}{\lambda q}\right).
\end{align*}
By \eqref{eq:N-choice-direct} and \eqref{eq:N-choice-F-sign},
\begin{align*}
 -N\left\{
 qN_{tt}+\left(\frac{q_t}{2}+\frac qt\right)N_t
 \right\}
 &\geq\frac{r^2K_r(q-2)}{64(1+q)^{7/4}}\\
 &\geq
 \begin{cases}
 -\dfrac1{32},&0<q\leq2,\\[3pt]
 \dfrac{(e^q-1)(q-2)}{64(1+q)^2},&q\geq2.
 \end{cases}
\end{align*}

For the bound on $0<q\leq2$, use $r^2K_r\leq e^q$ and
\begin{align*}
 \frac{d}{dq}\log\frac{e^q(2-q)}{(1+q)^{7/4}}
 =\frac{-4q^2+7q-10}{4(2-q)(1+q)}<0
 \qquad(0<q<2).
\end{align*}

For \(0<q\leq5\), \eqref{eq:lambda-choice-xqx} gives
\begin{align*}
 &\left(2+\frac2\lambda-2r^\lambda+\frac{4q}{3}\right)
 \frac{rq_r}{\lambda q}
 +(2q-3)\left(\frac{rq_r}{\lambda q}\right)^2\\
 &\qquad\geq
 \left(\frac2\lambda-\frac43-2(r^\lambda-q)\right)
 \frac{rq_r}{\lambda q}
 \geq-4e^{-1/(2\lambda)-2/3}.
\end{align*}

Substitution into \eqref{eq:R00-complete-general} therefore yields,
for \(2/257\leq\lambda\leq1\),
\begin{align}
 &\frac{4r^2N^2}{\lambda^2}
 \left(R_{00}-\frac{R_{01}^2}{R_{11}}\right)
 \nonumber\\
 &\quad\geq
 \begin{cases}
 \displaystyle
 \frac2\lambda-\frac{11}{3}-\frac1{8\lambda^2}
 -4e^{-1/(2\lambda)-2/3},
 &0<q\leq2,\\[5pt]
 \displaystyle
 \frac2\lambda-\frac{23}{3}
 -4e^{-1/(2\lambda)-2/3},
 &2\leq q\leq5,\\[5pt]
 \displaystyle
 \frac2\lambda-1-\frac{10q}{3}
 +\frac{(e^q-1)(q-2)}{16\lambda^2(1+q)^2},
 &q\geq5.
 \end{cases}
 \label{eq:lambda-choice-determinant}
\end{align}

The sufficient inequalities $1/(16\lambda^2)\geq1$ and
$2/\lambda-1/(8\lambda^2)\geq6$ are equivalent, for
$\lambda>0$, to
\begin{align}
 \frac1{12}\leq\lambda\leq\frac14.
 \label{eq:lambda-choice-interval}
\end{align}
On this interval, $4e^{-1/(2\lambda)-2/3}\leq4e^{-8/3}<1/3$.
For $q\geq5$,
\begin{align*}
 \frac{e^5-1}{12}>11,\qquad
 \frac{d}{dq}\log\frac{(e^q-1)(q-2)}{(1+q)^2}>\frac23,
\end{align*}
and therefore
\begin{align*}
 7-\frac{10q}{3}+\frac{(e^q-1)(q-2)}{(1+q)^2}
 >\frac43+4(q-5).
\end{align*}
It follows that every lower bound in
\eqref{eq:lambda-choice-determinant} is at least
$1/3-4e^{-8/3}>0$.

By \eqref{eq:N-weighted-K-lambda} and $N\geq q^{1/8}/\sqrt8$,
\begin{align}
 \int_{\Sigma_r}NK_r\dd A_r
 \geq\frac{4\sqrt2\,\pi}{5}\,r^{\lambda/8}.
 \label{eq:lambda-choice-growth}
\end{align}
The exponent increases with $\lambda$. We choose the largest
value in \eqref{eq:lambda-choice-interval}:
\begin{align}
 \lambda=\frac14,\qquad\alpha=\frac78.
 \label{eq:q-lambda-final}
\end{align}

\begin{proposition}\label{prop:log-positive}
On $E=[1,\infty)\times\Sph^2$ define
\begin{align}
 q(r,t)&=-\log\bigl(t^2+(1-t^2)e^{-r^{1/4}}\bigr),\quad r\geq 1, t\in [0, 1], \nonumber\\
 h_r&=\frac{r^2}{q}\dd t^2+r^{7/4}t^2\dd\phi^2, \quad 0\le t\le1,\quad \phi\in\R/(2\pi\mathbb Z)\nonumber\\
 g_E&=\left(16+\frac{(1+q)^{1/4}}8\right)\dd r^2+h_r,
 \label{eq:log-metric}
\end{align}
then the metric $g_E$ has strictly positive Ricci curvature and
$\Sc_{g_E}\ge K_r$.
\end{proposition}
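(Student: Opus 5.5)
The plan is to assemble the estimates already derived in Steps (3)--(7) with the parameter choice $\lambda=1/4$, $\alpha=7/8$, $\beta=1/4$, $A=16$, $B=1/8$. We work in the orthonormal frame $e_0=N^{-1}\partial_r$, $e_1=a^{-1}\partial_t$, $e_2=b^{-1}\partial_\phi$, with $a=r/\sqrt q$ and $b=r^{7/8}t$. The formulas of Proposition~\ref{prop:two-variable-Ricci} give $R_{00}$ (in the form \eqref{eq:R00-complete-general}), $R_{11}$, $R_{22}$ and $R_{01}$. The symmetry under $\phi\mapsto-\phi$ (an isometry, since no coefficient depends on $\phi$) forces $R_{02}=R_{12}=0$. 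Hence the Ricci tensor is block diagonal, with a $2\times2$ block in $(e_0,e_1)$ and the scalar entry $R_{22}$.

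Positivity of $\Ric_{g_E}$ on the open set $r\geq1$, $0<t<1$ then reduces to three inequalities: $R_{11}>0$, $R_{22}>0$, and $R_{00}-R_{01}^2/R_{11}>0$. The first two are the displayed bounds $R_{11}\geq\tfrac34K_r$ and $R_{22}\geq(1-(2-\lambda)^2/(2N^2))K_r>0$. These follow from $N>4$, from $r^2a_{rr}/a\leq2\lambda^2r^2K_r$, and from the bound on $(qN_{tt}+q_tN_t/2)/(r^2N)$, which comes from $N_{qq}<0$ and the identity for $qq_{tt}+q_t^2/2$.

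For the Schur complement:
\begin{itemize}
\item Use \eqref{eq:lambda-choice-R01} together with \eqref{eq:radial-bound-use-lapse-control}, valid since $\lambda=1/4\geq2/257$, to bound $R_{01}^2/R_{11}$.
\item Substitute into \eqref{eq:R00-complete-general}, controlling the $a_{rr}$ term by \eqref{eq:lambda-choice-arr} and \eqref{eq:lambda-choice-xqx}, and the $rN_r/N$ term by $N_q>0$, $q_r\geq0$ and $ra_r/a+\alpha>0$.
\item Control the $N$-term by \eqref{eq:N-choice-direct} and \eqref{eq:N-choice-F-sign}.
\end{itemize}
This yields the three-case lower bound \eqref{eq:lambda-choice-determinant}. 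With $\lambda=1/4$ in the interval \eqref{eq:lambda-choice-interval}, each case is at least $1/3-4e^{-8/3}>0$. The determinant positivity, together with $R_{11}>0$, gives positive definiteness of the $(e_0,e_1)$ block.

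The main obstacle is the boundary set, namely the poles $t=0$ and the equator $t=1$, where the coordinate formulas degenerate. There I would first check smoothness:
\begin{itemize}
\item At the poles, \eqref{eq:q-pole-smooth} shows $q$ is a smooth function of $t^2$ with $q(r,0)=r^\lambda$, so $a(r,0)=r^{7/8}$ and $h_r$ closes smoothly.
\item At the equator, \eqref{eq:q-equator-smooth} shows $q$, and hence $N^2=16+\tfrac18(1+q)^{1/4}$, is smooth in $v^2$ and the two hemispheres glue.
\end{itemize}
Since $\Ric_{g_E}$ is then continuous on $E$, I would get strict positivity at these loci by showing the lower bounds above are uniform as $t\to0$ or $t\to1$ for each fixed $r$. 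The relevant lower bounds are in terms of $K_r$ and $(\lambda^2/(4r^2N^2))\cdot$const. Here $K_r=(1-e^{-r^\lambda})e^q/r^2$ is bounded below by a positive constant on compact $r$-ranges, and $N$ is bounded. Thus the eigenvalues stay bounded away from $0$ on $\{r\}\times\Sph^2$, and positivity extends by continuity. Some care is needed with $R_{01}$, which carries the factor $\sqrt q/t$; but $R_{01}^2/R_{11}$ only enters through the already-bounded quantity $q(1-rq_r/(\lambda q))$.

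Finally, $\Sc_{g_E}=2(R_{00}+R_{11}+R_{22})$. Using $R_{00}>R_{01}^2/R_{11}\geq0$, $R_{11}\geq\tfrac34K_r$ and $R_{22}\geq(1-(7/4)^2/32)K_r$, we get $\Sc_{g_E}\geq2(\tfrac34+\tfrac{463}{512})K_r\geq K_r$.
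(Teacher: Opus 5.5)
Your proposal is correct and follows the paper's proof essentially step for step. You use smoothness via \eqref{eq:q-pole-smooth}--\eqref{eq:q-equator-smooth}, the Step~(7) bounds at $\lambda=1/4$ for $R_{11},R_{22}$, the Schur-complement bound \eqref{eq:lambda-choice-determinant}, and continuity at the poles and equator; the paper phrases this last step as nonnegativity by continuity together with a positive lower limit of $\det(R_{ij})$, and it reads off $R_{02}=R_{12}=0$ directly from Proposition~\ref{prop:two-variable-Ricci} instead of using the reflection $\phi\mapsto-\phi$. One slip: in an orthonormal frame $\Sc_{g_E}=R_{00}+R_{11}+R_{22}$, not twice this, although your bounds still give $\Sc_{g_E}\ge(\tfrac34+\tfrac{463}{512})K_r\ge K_r$.
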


\begin{proof}
Use the previously defined $N,a,b$ with $\lambda=1/4$ and
$\alpha=7/8$, and the frame of
Proposition~\ref{prop:two-variable-Ricci}.
Write $\Sigma_r=\{r\}\times\Sph^2$, $K_r=K_{h_r}$, and let
$\dd A_r$ be the area measure of $h_r$.

By \eqref{eq:q-pole-smooth}, $q,N,a,b/t$ are smooth in $(r,t^2)$
at either pole, with $a(r,0)=(b/t)(r,0)=r^{7/8}$.
Equation~\eqref{eq:q-equator-smooth} shows that $q/(1-t^2)$
extends smoothly and positively at the equator.
Lemma~\ref{lem:identical-hemisphere-smoothness} therefore proves
smoothness and positive definiteness on $E$.

For $r\geq1$ and $0<t<1$, \eqref{eq:lambda-choice-brr} gives
\begin{align}
 \frac78\leq\frac{ra_r}{a}\leq1.
 \label{eq:direct-a-derivatives}
\end{align}
The estimates in Step~(7), evaluated at $\lambda=1/4$, give
\begin{align}
 R_{11}&\geq\frac34K_r,
 \label{eq:R11}\\
 R_{22}&\geq\left(1-\frac{49}{32N^2}\right)K_r
 \geq\frac14K_r,
 \label{eq:R22}
\end{align}
where $N^2\geq16$.
Moreover, \eqref{eq:lambda-choice-determinant} and the positivity
estimates following \eqref{eq:lambda-choice-interval} imply
\begin{align*}
 64r^2N^2\left(R_{00}-\frac{R_{01}^2}{R_{11}}\right)
 \geq\frac13-4e^{-8/3}>0.
\end{align*}
Since $R_{02}=R_{12}=0$ by
Proposition~\ref{prop:two-variable-Ricci}, these inequalities prove
$\Ric_{g_E}>0$ on $0<t<1$.

For each fixed $r$, $K_r$ and $N$ have finite positive limits at
the poles and equator. The preceding bounds therefore give a
strictly positive lower limit for
\begin{align*}
 \det(R_{ij})
 =R_{11}R_{22}\left(R_{00}-\frac{R_{01}^2}{R_{11}}\right).
\end{align*}
The Ricci tensor is nonnegative definite at the endpoints by
continuity, and its determinant is positive there. Hence it is
positive definite on all of $E$. Finally,
\begin{align*}
 \Sc_{g_E}=R_{00}+R_{11}+R_{22}\geq K_r.
\end{align*}
\end{proof}

\section{Compact extension and the global counterexample}\label{sec:compact-extension}

Throughout this section the second fundamental form of a boundary is
\begin{align}
 \mathrm{II}_\gamma(X,Y)=-\gamma(\nabla_X \vec{n},Y),
 \label{eq:boundary-sign-definition}
\end{align}
where $\vec{n}$ is the inward unit normal and $X,Y$ are tangent to the
boundary.  

\begin{lemma}\label{lem:boundary-conformal}
Let $\gamma$ be a smooth metric on a three-manifold with boundary,
let $\psi$ be a smooth function up to the boundary, let $\sigma\geq0$ be constant,
and put $\widetilde\gamma=e^{-2\sigma\psi}\gamma$. Then
\begin{align}
 \Ric_{\widetilde\gamma}
 =\Ric_\gamma+\sigma\nabla^2\psi
       +\sigma^2\dd\psi\otimes\dd\psi
       +(\sigma\Delta\psi-\sigma^2|\nabla\psi|_\gamma^2)\gamma.
 \label{eq:opening-conformal-Ricci}
\end{align}
If $h$ is the induced boundary metric and $\vec{n}$ is the $\gamma$-inward
unit normal, then
\begin{align}
 \widetilde h=e^{-2\sigma\psi}h,\qquad
 \widetilde n=e^{\sigma\psi}\vec{n},\qquad
 \mathrm{II}_{\widetilde\gamma}
 =e^{-\sigma\psi}\bigl(\mathrm{II}_\gamma+\sigma \vec{n}(\psi)h\bigr).
 \label{eq:boundary-conformal-II}
\end{align}
In particular, if $\psi=0$, $\vec{n}(\psi)=1$, and
$\mathrm{II}_\gamma=0$ on the boundary, the boundary metric is
unchanged and $\mathrm{II}_{\widetilde\gamma}=\sigma h$.
\end{lemma}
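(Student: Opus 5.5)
The plan is to reduce everything to the standard conformal-change formulas for $\widetilde\gamma=e^{2u}\gamma$ with $u:=-\sigma\psi$, specialised to dimension $n=3$. The only inputs are the Levi-Civita connection of a conformal metric and the usual Ricci transformation law. No earlier result of the paper is needed.

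\textbf{Step 1 (connection).} From the Koszul formula, the connection of $\widetilde\gamma=e^{2u}\gamma$ is
\begin{align*}
 \widetilde\nabla_XY=\nabla_XY+X(u)Y+Y(u)X-\gamma(X,Y)\nabla u .
\end{align*}

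\textbf{Step 2 (Ricci).} Taking traces of the resulting curvature tensor gives the standard formula
\begin{align*}
 \Ric_{\widetilde\gamma}=\Ric_\gamma-(n-2)\bigl(\nabla^2u-\dd u\otimes\dd u\bigr)-\bigl(\Delta u+(n-2)|\nabla u|_\gamma^2\bigr)\gamma .
\end{align*}
With $n=3$ and $u=-\sigma\psi$ we have $\nabla^2u=-\sigma\nabla^2\psi$, $\dd u\otimes\dd u=\sigma^2\dd\psi\otimes\dd\psi$, $\Delta u=-\sigma\Delta\psi$ and $|\nabla u|^2=\sigma^2|\nabla\psi|^2$. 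Substituting these reproduces \eqref{eq:opening-conformal-Ricci} term by term. The only care needed here is the sign bookkeeping, which I would double-check on the trace: it should yield the familiar scalar-curvature law.

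\textbf{Step 3 (boundary data).} The identity $\widetilde h=e^{-2\sigma\psi}h$ is immediate by restriction. Since $\widetilde\gamma(\vec n,\vec n)=e^{2u}$, the $\widetilde\gamma$-unit inward normal is $\widetilde n=e^{-u}\vec n=e^{\sigma\psi}\vec n$.

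For the second fundamental form, I would first rewrite \eqref{eq:boundary-sign-definition} for tangent $X,Y$. Differentiating $\gamma(\vec n,Y)=0$ along $X$ gives $\mathrm{II}_\gamma(X,Y)=\gamma(\vec n,\nabla_XY)$. This holds after extending $Y$ tangentially, and the result is tensorial. In the same way, $\mathrm{II}_{\widetilde\gamma}(X,Y)=\widetilde\gamma(\widetilde n,\widetilde\nabla_XY)$.

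Now insert Step 1. The terms $X(u)Y+Y(u)X$ are tangent, so they are orthogonal to $\vec n$ and drop out. This leaves
\begin{align*}
 \mathrm{II}_{\widetilde\gamma}(X,Y)=e^{2u}e^{-u}\bigl(\gamma(\vec n,\nabla_XY)-h(X,Y)\,\vec n(u)\bigr)
 =e^{-\sigma\psi}\bigl(\mathrm{II}_\gamma+\sigma\vec n(\psi)h\bigr)(X,Y),
\end{align*}
which is \eqref{eq:boundary-conformal-II}.

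\textbf{Step 4 (special case).} If $\psi=0$ on the boundary, then $e^{-\sigma\psi}=1$ there, so $\widetilde h=h$. If in addition $\vec n(\psi)=1$ and $\mathrm{II}_\gamma=0$, the formula of Step 3 gives $\mathrm{II}_{\widetilde\gamma}=\sigma h$.

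The main (mild) obstacle is keeping the sign conventions consistent: the inward normal, the minus sign in \eqref{eq:boundary-sign-definition}, and $u=-\sigma\psi$ rather than $+\sigma\psi$. I would verify the final sign of the $\sigma\vec n(\psi)h$ term on a model case before relying on it.
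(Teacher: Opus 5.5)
Your proof is correct and follows essentially the paper's route: both use the conformal connection formula with $u=-\sigma\psi$ to get the Ricci identity, and then compute $\II_{\widetilde\gamma}$ from that same connection formula. The only cosmetic difference is that you write $\II(X,Y)=\gamma(\vec n,\nabla_XY)$ so the tangential terms drop out, whereas the paper differentiates $\widetilde n=e^{\sigma\psi}\vec n$ directly and contracts with $-\widetilde\gamma$; your signs agree with the stated formulas.
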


\begin{proof}
The two connections satisfy
\begin{align}
 \widetilde\nabla_XY-\nabla_XY
 =-\sigma\bigl(X(\psi)Y+Y(\psi)X
                 -\gamma(X,Y)\nabla\psi\bigr).
 \label{eq:boundary-connection-change}
\end{align}
Substitution into the coordinate formula for Ricci curvature
gives \eqref{eq:opening-conformal-Ricci}.
Since $\widetilde n=e^{\sigma\psi}\vec n$, for boundary-tangent $X$,
\begin{align*}
 \widetilde\nabla_X\widetilde n
 =e^{\sigma\psi}\bigl(\nabla_X\vec n
                    -\sigma\vec n(\psi)X\bigr).
\end{align*}
Contracting with $-\widetilde\gamma$ gives
\eqref{eq:boundary-conformal-II}.
\end{proof}

For $r\ge1$ and $0\le t\le1$, the inequality
$-\log(1-v)\ge v$ for $0\le v<1$ and convexity of $-\log$ give
\begin{align}
 \frac12(1-t^2)
 \le (1-e^{-r^{1/4}})(1-t^2)
 \le q(r,t)
 \le r^{1/4}(1-t^2).
 \label{eq:qbound}
\end{align}

\begin{lemma}\label{lem:fixed-filling}
Define
\begin{align}
 D:=\Bigl\{
 &\bigl(\sqrt{1-t^2}\cos\theta,
         \sqrt{1-t^2}\sin\theta,
         t\cos\phi,t\sin\phi\bigr):\nonumber\\
 &0\le t\le1,\quad 0\le\theta\le\pi,\quad
 \phi\in\R/(2\pi\mathbb Z)\Bigr\}\subset\Sph^3.
 \label{eq:filling-domain}
\end{align}
At $t=0$, all values of $\phi$ with $\theta$ fixed represent
one point; at $t=1$, all values of $\theta$ with $\phi$ fixed
represent one point. On $0<t<1$, define
\begin{align}
 \Gamma
 &=\frac{\dd t^2}{-\log\bigl(t^2+(1-t^2)e^{-1}\bigr)}
   +\frac{-\log\bigl(t^2+(1-t^2)e^{-1}\bigr)}
          {(1-e^{-1})^2}\dd\theta^2
   +t^2\dd\phi^2,\nonumber\\
 \Gamma_D
 &=\exp\!\left(
   -\frac{\sqrt{-\log\bigl(t^2+(1-t^2)e^{-1}\bigr)}
          \sin\theta}{2(1-e^{-1})}
   \right)\Gamma.
 \label{eq:filling-definitions}
\end{align}
Then $(D, \Gamma_D)$ is a smooth closed three-ball with 
\begin{align}
 \Ric_{\Gamma_D}>\frac5{288}\Gamma,
 \qquad
 \Gamma_D|_{T\partial D}=h_1,
 \qquad
 \II_{\partial D}=\frac14h_1.
 \label{eq:cap-conclusion}
\end{align}
\end{lemma}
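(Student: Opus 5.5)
We would prove the lemma by recognizing $\Gamma$ as a warped product over the cross-section $h_1$, and then applying \cref{lem:boundary-conformal} with $\sigma=\tfrac14$. Write
\[
Q(t):=q(1,t)=-\log\bigl(t^2+(1-t^2)e^{-1}\bigr),\qquad w(t):=\frac{\sqrt{Q(t)}}{1-e^{-1}} .
\]
Then, by \eqref{eq:log-metric} at $r=1$,
\[
\Gamma=h_1+w^2\dd\theta^2,\qquad \theta\in[0,\pi].
\]
The conformal factor is $e^{-\psi/2}=e^{-2\sigma\psi}$ with
\[
\psi:=w\sin\theta,\qquad \sigma=\tfrac14 .
\]
The boundary $\partial D$ is $\{\theta=0\}\cup\{\theta=\pi\}$, that is, two copies of the hemisphere $0\le t\le1$ glued along $t=1$. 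Each copy carries $h_1$, so $\partial D$ is exactly $(\Sigma_1,h_1)$.

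\textbf{Smoothness.} Inside $D$, the degeneration at $t=0$ is that of $h_1$ at its pole, which was already handled in the proof of \cref{prop:log-positive}. The new degeneration is at $t=1$, where $w=0$ and the $(s,\theta)$-part is a polar chart. In the $h_1$-arclength $s$ we have $\dd s=\dd t/\sqrt Q$, so
\[
\frac{\dd w}{\dd s}=\sqrt Q\,w_t=\frac{Q_t}{2(1-e^{-1})}.
\]
By \eqref{eq:q-general-qt}, $Q_t(1)=-2(1-e^{-1})$, so $|\dd w/\dd s|=1$ at $t=1$. Using \eqref{eq:q-equator-smooth}, $w^2$ is smooth in $v^2=1-t^2$, hence $w$ is an odd smooth function of the signed arclength. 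This gives the usual smoothness of $\dd s^2+w^2\dd\theta^2$ at the origin of polar coordinates. The function $\psi=w\sin\theta$ is the Cartesian coordinate $y$ in that chart, hence smooth. So $\Gamma$ and $\Gamma_D$ are smooth on the closed ball $D$.

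\textbf{Boundary terms.} On $\theta\in\{0,\pi\}$ we have $\psi=0$, so $\Gamma_D|_{T\partial D}=h_1$. The slices $\theta=\mathrm{const}$ of a warped product $h_1+w^2\dd\theta^2$ are totally geodesic, so $\II_\Gamma=0$. The inward unit normal is $\pm w^{-1}\partial_\theta$, and in both cases $\vec n(\psi)=|\cos\theta|=1$. The last assertion of \cref{lem:boundary-conformal} then gives $\II_{\partial D}=\sigma h_1=\tfrac14h_1$.

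\textbf{Ricci lower bound.} This is the main computation.
\begin{enumerate}[label=(\roman*)]
\item Warped-product formulas for $\Gamma$:
\[
\Ric_\Gamma|_{h_1}=K_1h_1-\frac{\nabla^2_{h_1}w}{w},\qquad
\Ric_\Gamma(\partial_\theta,\partial_\theta)=-w\,\Delta_{h_1}w,
\]
with mixed terms vanishing. Everything is a function of $t$ alone, obtained from $Q$ and \eqref{eq:q-general-qt}--\eqref{eq:q-general-K} at $r=1$.
\item The function $\psi=w\sin\theta$ on the warped product satisfies
\[
\nabla^2_\Gamma\psi=\sin\theta\bigl(\nabla^2_{h_1}w+w|\nabla w|^2\dd\theta^2\bigr)+\cos\theta\,(\text{mixed }\dd w\odot w\,\dd\theta\text{ terms}).
\]
The mixed terms cancel exactly as for a linear function in polar coordinates. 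This yields a closed form for $\nabla^2\psi$, $\Delta\psi$ and $|\nabla\psi|^2=|\nabla w|^2\sin^2\theta+\cos^2\theta$.
\item Substitute into \eqref{eq:opening-conformal-Ricci} with $\sigma=\tfrac14$. The quadratic terms enter as $\tfrac1{16}\dd\psi\otimes\dd\psi-\tfrac1{16}|\nabla\psi|^2\Gamma$, which is at least $-\tfrac1{16}|\nabla\psi|^2\Gamma$.
\end{enumerate}

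The key pointwise facts to use are:
\begin{itemize}
\item $K_1=(1-e^{-1})e^Q\ge1-e^{-1}$, by \eqref{eq:q-general-K};
\item $0\le Q\le1$, by \eqref{eq:qbound};
\item $|\nabla w|_{h_1}\le1$, from $|Q_t|\le2(1-e^{-1})e^{Q}$ together with $Q\le1$.
\end{itemize}
Concavity-type sign information on $w$ along $h_1$, i.e.\ nonpositivity of $\nabla^2_{h_1}w$ and $\Delta_{h_1}w$, would make $\Ric_\Gamma$ dominate a fixed multiple of $\Gamma$. The $\sigma$-linear terms are then perturbations of size $\tfrac14\sup|\nabla^2\psi|$.

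The step I expect to be hardest is this last quantitative one. The difficulty is getting an explicit positive constant such as $\tfrac5{288}$ uniformly in $t\in[0,1]$, including near $t=1$ where $w\to0$ and the quotients $\nabla^2w/w$ must be expanded via \eqref{eq:q-equator-smooth}. I would first compute $\nabla^2_{h_1}w$ explicitly as a function of $u:=e^{-Q}=t^2+(1-t^2)e^{-1}\in[e^{-1},1]$. Then I would reduce each eigenvalue bound to an elementary one-variable inequality in $u$, checked on the interval $[e^{-1},1]$.
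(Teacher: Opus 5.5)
Your smoothness and boundary arguments are fine and essentially match the paper's. At $t=1$ you use signed arclength and a polar chart, where the paper uses the ambient coordinates $(\sqrt{1-t^2}\cos\theta,\sqrt{1-t^2}\sin\theta,\phi)$. In your chart $\psi=(w/\sigma)\,y$ rather than $y$, unless you take $w$ itself as the radial coordinate.

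The gap is the Ricci bound, which is the substance of the lemma. You do not carry it out, and the mechanism you sketch for it would fail.

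First, $\Hess_{h_1}w\le0$ is false. In the frame $e_t=\sqrt Q\,\partial_t$, $e_\theta=w^{-1}\partial_\theta$, $e_\phi=t^{-1}\partial_\phi$ one gets $\Hess_{h_1}w(e_t,e_t)=\sqrt Q\,e^{Q}(1-2e^{Q-1})$. This is positive for $Q<1-\log2$, i.e.\ near the equator, so positivity of $\Ric_\Gamma$ there comes from $K_1$, not from concavity. Explicitly, $\Ric_\Gamma$ is diagonal with entries $2(1-e^{-1})e^{2Q-1}$ on $e_t,e_\theta$ and $2(1-e^{-1})e^{Q}$ on $e_\phi$. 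So the best uniform constant is $2(e-1)e^{-2}\approx0.47$, attained at $t=1$.

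Second, a uniform perturbation of size $\frac14\sup|\Hess\psi|$ cannot be absorbed by this constant. At $t=0$, $\theta=\pi/2$ the eigenvalues of $\Hess_\Gamma\psi$ are $-e$, $-(1-e^{-1})$, $-e$. Hence $\frac14\bigl(\Hess\psi+(\Delta\psi)\Gamma\bigr)$ has the eigenvalue $\frac14(-3e-1+e^{-1})\approx-2.2$ there.

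The missing idea is a pointwise comparison. $\Hess_\Gamma\psi$ and $\Ric_\Gamma$ are simultaneously diagonal in this frame. The Hessian is large only where $Q$ is near $1$, where $\Ric_\Gamma\approx2(e-1)$ is also large, and it vanishes at $t=1$, where $\Ric_\Gamma$ is smallest. The paper proves $\bigl(\Hess\psi(e_i,e_i)+\Delta\psi\bigr)/\Ric_\Gamma(e_i,e_i)\ge-3$ for each $i$. This gives $\Ric_{\Gamma_D}\ge\frac14\Ric_\Gamma-\frac1{16}|\dd\psi|^2\Gamma\ge\bigl(\frac{e-1}{2e^2}-\frac e{32}\bigr)\Gamma>\frac5{288}\Gamma$. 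Your final one-variable reduction can reach this only if it is organized as such an eigenvalue-by-eigenvalue ratio bound, with $\sin\theta\in[0,1]$ as a second parameter, not as uniform domination plus perturbation.

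Two computational slips would also derail the execution.
\begin{itemize}
\item In (ii) the $\dd\theta^2$ part of $\Hess_\Gamma\psi$ is $w\sin\theta\,(|\nabla w|^2-1)\dd\theta^2$, not $w\sin\theta\,|\nabla w|^2\dd\theta^2$. Your formula already fails for $y=\sigma\sin\theta$ in flat polar coordinates, whose Hessian is zero. Without the $-1$ you would get a spurious positive term $\sin\theta\,|\nabla w|^2/w$ in the $e_\theta$ direction. It is unbounded as $t\to1$ and would give a falsely favorable estimate.
\item $|\nabla w|_{h_1}^2=t^2e^{2Q}$, so $Q\le1$ and $t\le1$ alone only give the bound $e^2$. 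You need $t^2=(e^{-Q}-e^{-1})/(1-e^{-1})$, which yields $|\nabla w|^2=(e^{Q}-e^{2Q-1})/(1-e^{-1})\le1$. The paper instead uses \eqref{eq:qbound} to get $|\dd\psi|^2\le e/2$.
\end{itemize}
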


\begin{proof}
Near $t=0$, use coordinates $(\theta,t\cos\phi,t\sin\phi)$.
The functions $q(1,t)$ and
\begin{align*}
 \frac{q(1,t)^{-1}-1}{t^2}
\end{align*}
are smooth functions of $t^2$, with the second expression
extended at zero. Together with
\begin{align*}
 \dd(t\cos\phi)^2+\dd(t\sin\phi)^2
 =\dd t^2+t^2\dd\phi^2,
\end{align*}
this proves smoothness and positivity of $\Gamma$ at $t=0$.

Near $t=1$, use coordinates
\begin{align*}
 \bigl(\sqrt{1-t^2}\cos\theta,
       \sqrt{1-t^2}\sin\theta,\phi\bigr),
 \qquad \sqrt{1-t^2}\sin\theta\ge0,
\end{align*}
with $\phi$ restricted to a circle coordinate chart. The identity
\begin{align*}
 \frac{q(1,t)}{1-t^2}
 =(1-e^{-1})\int_0^1
 \frac{\dd u}{1-(1-e^{-1})(1-t^2)u}
\end{align*}
shows that this quotient is smooth and positive as a function
of $1-t^2$, with value $1-e^{-1}$ at $t=1$.
Writing the two transverse metric terms as
\begin{align*}
 &\frac{1-t^2}{t^2q(1,t)}
       \bigl(\dd\sqrt{1-t^2}\bigr)^2
 +\frac{q(1,t)}{(1-e^{-1})^2(1-t^2)}
       (1-t^2)\dd\theta^2,
\end{align*}
the two coefficients are smooth functions of $1-t^2$ with
the same positive value $(1-e^{-1})^{-1}$ at $t=1$.
Their difference is divisible by $1-t^2$; conversion to the
displayed Cartesian coordinates therefore gives a smooth
positive definite tensor. The term $t^2\dd\phi^2$ is smooth
and positive there.
Moreover,
\begin{align*}
 \frac{\sqrt{q(1,t)}}{1-e^{-1}}\sin\theta
 =\frac1{1-e^{-1}}
   \sqrt{\frac{q(1,t)}{1-t^2}}
   \bigl(\sqrt{1-t^2}\sin\theta\bigr)
\end{align*}
is smooth at $t=1$, and is smooth at $t=0$ by the first
coordinate calculation. Thus $\Gamma_D$ is also smooth and
positive definite on $D$.

Set
\begin{align*}
 \psi:=\frac{\sqrt{q(1,t)}\sin\theta}{1-e^{-1}},
 \qquad \Gamma_D=e^{-\psi/2}\Gamma.
\end{align*}
For $0<t<1$, use the $\Gamma$-orthonormal frame
\begin{align*}
 e_t=\sqrt{q(1,t)}\,\partial_t,\qquad
 e_\theta=\frac{1-e^{-1}}{\sqrt{q(1,t)}}\partial_\theta,
 \qquad e_\phi=t^{-1}\partial_\phi.
\end{align*}
Both $\Ric_\Gamma$ and $\Hess_\Gamma\psi$ are diagonal in this
frame. Proposition~\ref{prop:two-variable-Ricci} gives
\begin{align}
 \Ric_\Gamma(e_i,e_i)
 &=\begin{cases}
 2(1-e^{-1})e^{2q(1,t)-1},&i=t,\theta,\\
 2(1-e^{-1})e^{q(1,t)},&i=\phi.
 \end{cases}
 \label{eq:cap-Ricci}
\end{align}
In particular, $\Ric_\Gamma\geq2(e-1)e^{-2}\Gamma$.
Direct differentiation gives
\begin{align*}
 \Hess_\Gamma\psi(e_t,e_t)
 &=\sin\theta\sqrt{q(1,t)}e^{q(1,t)}
       \bigl(1-2e^{q(1,t)-1}\bigr),\\
 \Hess_\Gamma\psi(e_\theta,e_\theta)
 &=\sin\theta\,
   \frac{e^{q(1,t)}-e^{2q(1,t)-1}-(1-e^{-1})}{\sqrt{q(1,t)}},\\
 \Hess_\Gamma\psi(e_\phi,e_\phi)
 &=-\sin\theta\sqrt{q(1,t)}e^{q(1,t)}.
\end{align*}
For $0\leq u\leq1$, concavity of $e^u-e^{2u-1}$ and elementary
maximization give
\begin{align*}
 e^u-e^{2u-1}-(1-e^{-1})&\geq-(1-e^{-1})u,\\
 \sqrt u\,e^{1-2u}&\leq\frac{\sqrt e}{2},\qquad
 \sqrt u\,e^{-u}\leq\frac1{\sqrt{2e}}.
\end{align*}
Using $0\leq q(1,t)\leq1$ and $0\leq\sin\theta\leq1$, we obtain
\begin{align}
 \frac{\Hess_\Gamma\psi(e_i,e_i)+\Delta_\Gamma\psi}
      {\Ric_\Gamma(e_i,e_i)}
 \geq
 \begin{cases}
 -\dfrac3{2(1-e^{-1})}-\dfrac{\sqrt e}{4},&i=t,\\[3pt]
 -\dfrac1{1-e^{-1}}-\dfrac{\sqrt e}{2},&i=\theta,\\[3pt]
 -\dfrac3{2(1-e^{-1})}-\dfrac1{2\sqrt{2e}},&i=\phi.
 \end{cases}
 \label{eq:cap-Hess-bound}
\end{align}
All three lower bounds exceed $-3$. Hence
\begin{align}
 \Hess_\Gamma\psi+(\Delta_\Gamma\psi)\Gamma
 \geq-3\Ric_\Gamma.
 \label{eq:cap-Hess-Ric}
\end{align}
By \eqref{eq:qbound} at $r=1$,
\begin{align}
 |\dd\psi|_\Gamma^2
 =t^2e^{2q(1,t)}\sin^2\theta+\cos^2\theta
 \leq\frac e2<\frac32.
 \label{eq:cap-gradient}
\end{align}
Applying \eqref{eq:opening-conformal-Ricci} with $\sigma=1/4$ gives
\begin{align*}
 \Ric_{\Gamma_D}
 \geq\left(\frac{e-1}{2e^2}-\frac e{32}\right)\Gamma
 >\frac5{288}\Gamma.
\end{align*}
These bounds extend to the endpoint sets by smoothness.

On the faces $\theta=0,\pi$, the inward $\Gamma$-unit normals are,
respectively, $\pm(1-e^{-1})q(1,t)^{-1/2}\partial_\theta$.
There $\II_\Gamma=0$, $\psi=0$, $\vec n(\psi)=1$, and the
induced metric is $h_1$. Lemma~\ref{lem:boundary-conformal}, followed
by smooth extension across $t=0,1$, yields
\begin{align*}
 \Gamma_D|_{T\partial D}=h_1,\qquad
 \II_{\partial D}=\frac14h_1.
\end{align*}
\end{proof}

\begin{proposition}\label{prop:fixed3-extension}
There is a smooth complete metric $g$ on $\R^3$ with $\Ric_g>0$
which equals \eqref{eq:log-metric} on $r\ge2$.
\end{proposition}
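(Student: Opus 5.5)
The plan is to glue the closed three-ball $(D,\Gamma_D)$ of Lemma~\ref{lem:fixed-filling} to the outer cylinder $(E,g_E)$ of Proposition~\ref{prop:log-positive} along $\partial E=\{1\}\times\Sph^2$, and then smooth the seam with Perelman's gluing theorem \cite{Perelman1997Betti}. That theorem applies to two Riemannian manifolds with $\Ric>0$ whose boundaries are isometric and whose second fundamental forms (each taken with respect to its own inward normal, as in \eqref{eq:boundary-sign-definition}) satisfy $\II_1+\II_2>0$. It produces a smooth metric with $\Ric>0$ that differs from the glued metric only in an arbitrarily small neighbourhood of the seam. Topologically, $D\cup_{\partial}[1,\infty)\times\Sph^2\cong\R^3$. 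If the modification is confined to $r<2$, the resulting metric equals \eqref{eq:log-metric} on $r\geq2$.

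The boundary isometry is immediate. The induced metric of $g_E$ on $r=1$ is $h_1=q(1,t)^{-1}\dd t^2+t^2\dd\phi^2$, and Lemma~\ref{lem:fixed-filling} gives $\Gamma_D|_{T\partial D}=h_1$. The identification of $\partial D$ with $\{1\}\times\Sph^2$ uses the coordinates $(t,\phi)$ on each face $\theta=0,\pi$, matched to the two hemispheres of $\Sigma_1$, consistently with the pole and equator smoothness already established. Positivity of Ricci on each piece is given by Proposition~\ref{prop:log-positive} and \eqref{eq:cap-conclusion}.

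The key computation is $\II$ of $\partial E$. The inward normal of $E$ at $r=1$ is $\vec n=N^{-1}\partial_r$, and since $g_E=N^2\dd r^2+h_r$,
\begin{align*}
 \II_E(X,Y)=-g_E(\nabla_X\vec n,Y)=-\frac1{2N}\,\partial_rh_r(X,Y).
\end{align*}
The metric $h_r$ is diagonal in $(t,\phi)$, so it suffices to treat the two diagonal entries:
\begin{align*}
 \frac{\partial_r(a^2)}{a^2}=\frac2r\,\frac{ra_r}{a},\qquad
 \frac{\partial_r(b^2)}{b^2}=\frac{7}{4r}.
\end{align*}
By \eqref{eq:direct-a-derivatives}, $\tfrac78\leq ra_r/a\leq1$, so at $r=1$ we get $\tfrac74h_1\leq\partial_rh_r\leq2h_1$. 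Together with $N>4$ this gives
\begin{align*}
 \II_E\geq-\frac1Nh_1>-\frac14h_1 .
\end{align*}
Combining with $\II_{\partial D}=\tfrac14h_1$ from \eqref{eq:cap-conclusion} yields $\II_{\partial D}+\II_E>0$. The inequality is strict because $N^2=16+\tfrac18(1+q)^{1/4}>16$ everywhere, including the poles and equator where $q$ is finite. So Perelman's hypothesis holds uniformly on the compact seam.

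Completeness follows because any curve leaving every compact set must cross the region $r\geq2$, where its length is at least $4\int\dd r=\infty$ since $N\geq4$. The main obstacle is bookkeeping of conventions: making sure the sign of $\II$ in \eqref{eq:boundary-sign-definition} matches Perelman's statement, that both forms use inward normals, and that the estimate $ra_r/a\leq1$ (from \eqref{eq:lambda-choice-brr}) extends continuously to $t=0,1$. The latter holds because $a$ is smooth there and the bound passes to the limit. If one prefers an extra margin, it can be had by noting that $\II_E\geq-\tfrac1{\sqrt{16+1/8}}h_1$, which is strictly greater than $-\tfrac14h_1$.
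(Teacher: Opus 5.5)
Your proposal is correct and is essentially the paper's proof. You glue $(D,\Gamma_D)$ to $(E,g_E)$ along $h_1$, read off the principal curvatures of $\partial E$ as $-a_r/(Na)$ and $-7/(8N)$, and use $ra_r/a\le1$ with $N\ge\sqrt{129/8}>4$ to get $\II_{\partial D}+\II_{\partial E}\ge(\tfrac14-1/\sqrt{129/8})h_1>0$; you then apply Perelman's gluing theorem in a neighbourhood inside $1\le r<2$, and your completeness argument via $g\ge16\dd r^2$ fills in a point the paper leaves implicit.

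One small slip: $a=r/\sqrt q$ is not smooth at the equator, since it blows up as $t\to1$. However, $ra_r/a=r\partial_r\log a_1$, with $a_1$ from Lemma~\ref{lem:identical-hemisphere-smoothness}, is smooth there, so your continuity extension of the bound still holds.
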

\begin{proof}
At $r=1$ the inward unit normal of $E$ is $N^{-1}\partial_r$.
The two boundary principal curvatures in
convention~\eqref{eq:boundary-sign-definition} are
$\left.-a_r/(Na)\right|_{r=1}$ and $-7/(8N)$.
By \eqref{eq:direct-a-derivatives},
$\left.a_r/a\right|_{r=1}\le1$, while
$N\ge\sqrt{129/8}>4$. Hence
\begin{align}
 \II_{\partial D}+\II_{\partial E}
 \ge\left(\frac14-\frac1{\sqrt{129/8}}\right)h_1>0.
 \label{eq:fixed-filling-boundary-sum}
\end{align}
The boundary metrics are both $h_1$, and the two Ricci tensors
are strictly positive. 

Apply Theorem~\ref{thm:unified-positive-Ricci-gluing} in
a neighborhood whose part in $E$ is contained in $1\le r<2$.

The resulting metric is smooth, has positive Ricci curvature,
and retains the displayed exterior. The explicit boundary
identification is $\partial D$ and $\partial E$. 
\end{proof}

Let $D_0= (\mathbb{R}^3, g)\backslash \{r>2\}$. 

For $A\subset M$ and $x,y\in A$, define
\begin{align*}
 d_A(x,y):=
 \inf_{\substack{
   \gamma:[0,1]\to A\ \text{piecewise smooth}\\
   \gamma(0)=x,\ \gamma(1)=y}}
 \int_0^1|\dot\gamma(u)|_g\,\dd u.
\end{align*}

\begin{lemma}\label{lem-domain-with-respet-to-ball}
Fix $p_0\in D_0$ and choose $C_0\ge1$ such that
\begin{align*}
 \sup_{z\in D_0}d_{D_0}(p_0,z)<C_0.
\end{align*}
Then for every $p\in M$ and
\begin{align*}
 s\ge\max\{128,4(C_0+d_g(p,p_0))\},
 \qquad T:=\frac{s}{32},
\end{align*}
one has
\begin{align*}
 \{T/2\le r\le T\}\subset B_p^g(s).
\end{align*}
\end{lemma}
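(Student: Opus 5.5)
Given $x$ with $T/2\le r(x)\le T$, we will show $d_g(p,x)<s$ by the triangle inequality
\begin{align*}
d_g(p,x)\le d_g(p,p_0)+d_g(p_0,y)+d_g(y,x),
\end{align*}
where $y\in\partial D_0=\{r=2\}$ is chosen to make $d_g(y,x)$ small. Since $d_g\le d_{D_0}$ on $D_0$, we have $d_g(p_0,y)<C_0$. The hypothesis on $s$ gives $d_g(p,p_0)+C_0\le s/4$, so it suffices to produce $y\in\{r=2\}$ with $d_g(y,x)\le \tfrac34 s$, and in fact we will aim for $d_g(y,x)\le s/2$.

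\textbf{Path from $x$ down to $\{r=2\}$.} Write $x=(r_1,t_1,\phi_1)$ in the coordinates of \eqref{eq:log-metric}. We take the concatenation of two curves. First, a curve within $\Sigma_{r_1}$ from $x$ to the equator point $(r_1,1,\phi_1)$. Second, the radial segment $t=1$, $\phi=\phi_1$, running from $r=r_1$ down to $r=2$. Along $t=1$ we have $q=0$, so $N^2=16+1/8$ and the radial segment has length $\sqrt{129/8}\,(r_1-2)<4.1\,T$.

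\textbf{Length in $\Sigma_{r_1}$.} This is the step needing care, because $a^2=r^2/q$ and $q$ can be large near the poles. The $t$-length is $\int_{t_1}^1 r_1 q^{-1/2}\,dt$. By \eqref{eq:qbound}, $q\ge \tfrac12(1-t^2)\ge\tfrac12(1-t)$, so
\begin{align*}
\int_0^1 r_1 q^{-1/2}\,dt\le r_1\sqrt2\int_0^1(1-t)^{-1/2}\,dt=2\sqrt2\, r_1\le 2\sqrt2\,T.
\end{align*}
The total path length is therefore at most $(4.1+2.9)T<7.1\,T=7.1s/32<s/4$.

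Combining the pieces gives $d_g(p,x)<s/4+s/4<s$. The condition $s\ge128$ ensures $T/2\ge2$, so $x$ lies in the region where the metric equals \eqref{eq:log-metric} and the coordinate computation is valid there. I expect the only delicate point to be the uniform bound on the meridian length in $\Sigma_r$; \eqref{eq:qbound} resolves it.
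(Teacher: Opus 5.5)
Your proof is correct and follows essentially the same route as the paper: you combine an equatorial radial segment (where $q=0$, so $N=\sqrt{129/8}$), a meridian arc in $\Sigma_r$ whose length is controlled by \eqref{eq:qbound}, and the $C_0$ bound inside $D_0$. The differences are minor. You drop to the equator along $x$'s own meridian, which needs one arc rather than the paper's two arcs through a pole from the fixed point $e_r$, and you then apply the triangle inequality in $d_g$. The paper instead builds a single curve inside $\Omega_r$, which also gives the intrinsic bound \eqref{eq:fixed3-distance-upper} covering all of $\Omega_T$; that bound is reused in Proposition~\ref{prop:simple-truncations}.
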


\begin{proof}
Put $\Omega_R=D_0\cup([2,R]\times\Sph^2)$ for $R\geq2$, and
let $e_r=(r,(0,1,0))\in\Sigma_r$.
Two meridian arcs through a pole connect $e_r$ to any
$y\in\Sigma_r$. By \eqref{eq:qbound}, their total length gives
\begin{align*}
 d_{\Sigma_r}(e_r,y)
 \leq2r\int_0^1\frac{\dd t}{\sqrt{q(r,t)}}
 \leq\sqrt2\pi r.
\end{align*}
The equatorial curve $\rho\mapsto e_\rho$, $2\leq\rho\leq r$,
has length $\sqrt{129/8}(r-2)$.
Together with a path from $p_0$ to $e_2$ in $D_0$, these curves
stay in $\Omega_r$ and yield
\begin{align}
 d_{\Omega_r}(p_0,y)
 <C_0+\sqrt{129/8}(r-2)+\sqrt2\pi r
 <C_0+16r.
 \label{eq:fixed3-distance-upper}
\end{align}
Consequently, $d_{\Omega_R}(p_0,y)<C_0+16R$ for every
$y\in\Omega_R$, including $y\in D_0$.
For $T=s/32$ under the stated assumptions, $T\geq4$ and
$C_0+d_g(p,p_0)+16T\leq3s/4$. Thus every $y\in\Omega_T$
satisfies $d_g(p,y)<3s/4$, which proves the claimed inclusion.
\end{proof}

\begin{theorem}\label{thm:fixed3-main}
There exists a smooth complete metric $g$ on $\R^3$ with
$\Ric_g>0$ such that
\begin{align}
 \lim_{s\to\infty}\frac1s\int_{B_p^g(s)}\Sc_g\dd\mu_g=+\infty
 \qquad \forall p\in\R^3.
 \label{eq:fixed3-main-limits}
\end{align}
\end{theorem}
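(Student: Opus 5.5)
The metric is the one from Proposition~\ref{prop:fixed3-extension}: it is smooth and complete on $\R^3$, has $\Ric_g>0$, and coincides with $g_E$ of \eqref{eq:log-metric} on $\{r\ge2\}$. The proof then bounds the ball integral from below by an integral over a shell $\{T/2\le r\le T\}$ contained in the ball, and shows that this shell integral grows faster than linearly in $s$.

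\textbf{Step 1: reduce to a shell.} Since $\Ric_g>0$, we have $\Sc_g>0$ everywhere, so the integrand is nonnegative. Fix $p$ and take $s\ge\max\{128,4(C_0+d_g(p,p_0))\}$ with $T=s/32$. Lemma~\ref{lem-domain-with-respet-to-ball} gives $\{T/2\le r\le T\}\subset B_p^g(s)$, and $T/2\ge2$, so on this shell $g=g_E$. Hence
\begin{align*}
 \int_{B_p^g(s)}\Sc_g\dd\mu_g\ \ge\ \int_{T/2}^{T}\int_{\Sigma_r}\Sc_{g_E}\,N\dd A_r\dd r .
\end{align*}
Here $\dd\mu_{g_E}=N\dd r\dd A_r$, with $N^2=16+\tfrac18(1+q)^{1/4}$.

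\textbf{Step 2: cross-sectional lower bound.} Proposition~\ref{prop:log-positive} gives $\Sc_{g_E}\ge K_r$. Then \eqref{eq:lambda-choice-growth} with $\lambda=1/4$ gives
\begin{align*}
 \int_{\Sigma_r}NK_r\dd A_r\ \ge\ \frac{4\sqrt2\,\pi}{5}\,r^{1/32}.
\end{align*}
To be safe I would recheck this from \eqref{eq:N-weighted-K-lambda}, which reads $2\pi r^{-\lambda/2}\int_0^{r^\lambda}N(u)u^{-1/2}\dd u$. Using $N\ge u^{1/8}/\sqrt8$, the integral is at least $2\pi r^{-\lambda/2}\cdot\tfrac{1}{\sqrt8}\cdot\tfrac{8}{5}\,r^{5\lambda/8}=\tfrac{4\sqrt2\pi}{5}r^{\lambda/8}$.

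\textbf{Step 3: integrate and conclude.} Integrating over $r\in[T/2,T]$ gives
\begin{align*}
 \int_{B_p^g(s)}\Sc_g\dd\mu_g\ \ge\ \frac{4\sqrt2\pi}{5}\cdot\frac T2\cdot\Bigl(\frac T2\Bigr)^{1/32}=c\,s^{1+1/32},
\end{align*}
where $c>0$ is a constant. Dividing by $s$, the ratio is at least $c\,s^{1/32}\to\infty$. So the limit exists and equals $+\infty$ for every $p$, since the lower bound holds for all sufficiently large $s$.

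\textbf{Main obstacle.} The real content is already in the earlier results: the ball-containment lemma, and positivity of $\Ric$ giving $\Sc\ge0$ so the integral over the complement of the shell can be discarded. What remains to verify carefully is the orientation of the volume form, namely that $\dd\mu_g=N\dd r\dd A_r$ is exactly the weight appearing in \eqref{eq:N-weighted-K-lambda}. Once that is checked, the argument is direct.
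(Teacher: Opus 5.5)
Your proposal is correct and follows the paper's proof essentially verbatim: it restricts to the shell $\{T/2\le r\le T\}\subset B_p^g(s)$ via Lemma~\ref{lem-domain-with-respet-to-ball}, uses $\Sc_g\ge K_r$ together with \eqref{eq:lambda-choice-growth} at $\lambda=1/4$, and integrates in $r$. The only difference is that you keep the constant $\frac{4\sqrt2\,\pi}{5}$ where the paper rounds down to $\frac{\pi}{2}$, which does not affect the argument.
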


\begin{proof}
Fix the metric supplied by Proposition~\ref{prop:fixed3-extension}.

Proposition~\ref{prop:log-positive} and \eqref{eq:lambda-choice-growth} with $\lambda=1/4$ give
\begin{align}
 \int_{\Sigma_r}N\Sc_g\dd A_r\ge\frac\pi2 r^{1/32}
 \qquad(r\ge2).
 \label{eq:fixed3-scalar-radial-density}
\end{align}

Set $T=s/32$ and take $s$ as in Lemma~\ref{lem-domain-with-respet-to-ball}. Positivity of scalar curvature and \eqref{eq:fixed3-scalar-radial-density} give
\begin{align}
 \int_{B_p^g(s)}\Sc_g\dd\mu_g
 &\ge\frac\pi2\int_{T/2}^T r^{1/32}\dd r
 \ge\frac{\pi T}{4}(T/2)^{1/32}.
\end{align}

Substituting $T=s/32$ proves
\begin{align}
 \frac1s\int_{B_p^g(s)}\Sc_g\dd\mu_g
 \ge\frac\pi{128}\left(\frac{s}{64}\right)^{1/32}.
 \nonumber 
\end{align}
The conclusion follows from the above.
\end{proof}

\begin{corollary}\label{cor:fixed3-local-rescaling}
Fix any $p\in\R^3$ and any sequence $s_j>0$ with $s_j\to\infty$.
The metrics $g_j^{\mathrm{nc}}=s_j^{-2}g$ on $\R^3$ are smooth
and complete, satisfy $\Ric_{g_j^{\mathrm{nc}}}>0$, and obey
\begin{align}
 \lim_{j\to\infty}\int_{B_p^{g_j^{\mathrm{nc}}}(1)}
       \Sc_{g_j^{\mathrm{nc}}}\dd\mu_{g_j^{\mathrm{nc}}}=+\infty.
 \label{eq:local-rescaling-conclusions}
\end{align}
\end{corollary}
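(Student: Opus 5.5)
The plan is to rescale the metric $g$ of Theorem~\ref{thm:fixed3-main} and use how each quantity in the statement scales. Set $g_j^{\mathrm{nc}}=s_j^{-2}g$. Smoothness is unchanged, and completeness is preserved because Cauchy sequences and bounded closed sets are the same for $g$ and for a constant multiple of $g$. Ricci curvature is invariant under constant rescaling, $\Ric_{c^2g}=\Ric_g$ as $(0,2)$-tensors. So $\Ric_{g_j^{\mathrm{nc}}}=\Ric_g>0$, and positivity as a quadratic form does not depend on the metric.

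For the integral I would use the three-dimensional scaling laws. With $\tilde g=s_j^{-2}g$ we have
\begin{align*}
 \Sc_{\tilde g}=s_j^{2}\Sc_g,\qquad
 \dd\mu_{\tilde g}=s_j^{-3}\dd\mu_g,\qquad
 B_p^{\tilde g}(1)=B_p^g(s_j).
\end{align*}
The last identity holds because $d_{\tilde g}=s_j^{-1}d_g$. Combining these gives
\begin{align*}
 \int_{B_p^{g_j^{\mathrm{nc}}}(1)}\Sc_{g_j^{\mathrm{nc}}}\dd\mu_{g_j^{\mathrm{nc}}}
 =\frac1{s_j}\int_{B_p^g(s_j)}\Sc_g\dd\mu_g .
\end{align*}

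By \eqref{eq:fixed3-main-limits}, the right-hand side tends to $+\infty$ along any sequence $s_j\to\infty$. This proves \eqref{eq:local-rescaling-conclusions}. For large $j$ one can also use the explicit bound $\frac{\pi}{128}(s_j/64)^{1/32}$ from the proof of Theorem~\ref{thm:fixed3-main}, which is valid once $s_j\ge\max\{128,4(C_0+d_g(p,p_0))\}$.

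There is no real obstacle here. The only points to check are the exponent bookkeeping in dimension $3$: the factor $s_j^{2-3}=s_j^{-1}$ exactly matches the normalization $s^{-1}$ in the theorem. One should also note that the ball of radius $1$ in the rescaled metric is precisely the ball of radius $s_j$ in $g$.
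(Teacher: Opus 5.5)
Your proposal is correct and follows the paper's proof essentially verbatim. Constant rescaling preserves smoothness, completeness and the sign of the Ricci tensor, and the identities $B_p^{g_j^{\mathrm{nc}}}(1)=B_p^g(s_j)$ and $\int_{B_p^{g_j^{\mathrm{nc}}}(1)}\Sc_{g_j^{\mathrm{nc}}}\dd\mu_{g_j^{\mathrm{nc}}}=s_j^{-1}\int_{B_p^g(s_j)}\Sc_g\dd\mu_g$ reduce the claim to Theorem~\ref{thm:fixed3-main}.
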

\begin{proof}
Constant scaling preserves smoothness, completeness, and the sign
of the Ricci tensor, then
\begin{align}
 B_p^{g_j^{\mathrm{nc}}}(1)&=B_p^g(s_j),\nonumber\\
 \int_{B_p^{g_j^{\mathrm{nc}}}(1)}
       \Sc_{g_j^{\mathrm{nc}}}\dd\mu_{g_j^{\mathrm{nc}}}
 &=s_j^{-1}\int_{B_p^g(s_j)}\Sc_g\dd\mu_g.
 \label{eq:local-rescaling-identities}
\end{align}
Theorem~\ref{thm:fixed3-main} proves the limit. 
\end{proof}

\section{Compact truncations and the compact local counterexamples}
\label{sec:truncations}\label{sec:compact-consequences}
Fix $g,D_0,p_0,C_0$ as in the proof of
Theorem~\ref{thm:fixed3-main}. For $T>2$, define
\begin{align}
 \Omega_T=D_0\cup([2,T]\times\Sph^2),\qquad
 \Sigma_T=\partial\Omega_T.
 \label{eq:compact-fixed-domains}
\end{align}

\begin{proposition}\label{prop:simple-truncations}
Each $\Omega_T$ is diffeomorphic to the closed three-ball, and
its double by the identity boundary map is diffeomorphic to
$\Sph^3$. With the inward normal convention,
\begin{align}
 \mathrm{II}_{\Sigma_T}
 \geq\frac{7}{8T\max_{\Sigma_T}N}\,h_T>0.
 \label{eq:compact-fixed-boundary}
\end{align}
If $T\geq\max\{4,C_0/8\}$, then
\begin{align}
 d_{\Omega_T}(p_0,x)<24T\qquad(x\in\Omega_T).
 \label{eq:compact-fixed-intrinsic-distance}
\end{align}
\end{proposition}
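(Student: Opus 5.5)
The proof splits into three parts: topology, the boundary second fundamental form, and the intrinsic diameter bound.

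\emph{Topology.} By Proposition~\ref{prop:fixed3-extension} the metric $g$ lives on $\R^3$, and $D_0$ is a closed three-ball: it is the glued ball $D$ together with the collar $[1,2]\times\Sph^2$. Adding the collar $[2,T]\times\Sph^2$ does not change the diffeomorphism type, so $\Omega_T$ is diffeomorphic to a closed three-ball with boundary $\Sigma_T=\{T\}\times\Sph^2$. The double of a closed three-ball along the identity boundary map is the standard $\Sph^3$: glue two copies of $B^3$, viewed as the two hemispheres, along their common equator.

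\emph{Second fundamental form.} At $r=T$ the inward unit normal is $\vec n=-N^{-1}\partial_r$. The same computation as in Proposition~\ref{prop:fixed3-extension}, with the sign reversed because the normal now points toward decreasing $r$, shows that the principal curvatures in the convention \eqref{eq:boundary-sign-definition} are
\[
\frac{a_r}{Na}\quad\text{on }\partial_t,\qquad \frac{b_r}{Nb}=\frac{7}{8NT}\quad\text{on }\partial_\phi.
\]
Both coordinate directions are eigendirections, because the metric is diagonal and $h_r$ has no $dt\,d\phi$ term. By \eqref{eq:direct-a-derivatives}, $a_r/a\ge 7/(8r)$. Hence
\[
\II_{\Sigma_T}\ge \frac{7}{8T N}\,h_T\ge \frac{7}{8T\max_{\Sigma_T}N}\,h_T,
\]
pointwise for $0<t<1$. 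The bound extends to the poles and the equator by smoothness and continuity, using Lemma~\ref{lem:identical-hemisphere-smoothness}. I would check one step carefully here: that $\II(X,X)=(N^{-1}\partial_r\gamma(X,X))/2$ has the right sign for the inward normal.

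\emph{Distance bound.} I will reuse the proof of Lemma~\ref{lem-domain-with-respet-to-ball}. Estimate \eqref{eq:fixed3-distance-upper} gives $d_{\Omega_r}(p_0,y)<C_0+16r$ for $y\in\Sigma_r$, $2\le r\le T$. Since $\Omega_r\subset\Omega_T$, and points of $D_0$ are within $C_0$ of $p_0$, every $x\in\Omega_T$ satisfies
\[
d_{\Omega_T}(p_0,x)<C_0+16T.
\]
If $T\ge C_0/8$, then $C_0\le 8T$, so $d_{\Omega_T}(p_0,x)<24T$. The condition $T\ge4$ only ensures that the collar is nondegenerate and that the estimate $\sqrt{129/8}(r-2)+\sqrt2\pi r<16r$ is in range.

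The main point needing care is the sign bookkeeping for the second fundamental form, particularly for the $a_r/a$ direction. The rest is a direct reuse of earlier estimates.
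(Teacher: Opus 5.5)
Your proposal is correct and follows essentially the same route as the paper, which also takes the inward normal $-N^{-1}\partial_r$ and writes $\II=(2N)^{-1}\partial_r h_r$ (so the sign you flagged is right). The paper likewise bounds the principal curvatures $a_r/(Na)$ and $7/(8TN)$ from below via \eqref{eq:direct-a-derivatives}, reuses the curves behind \eqref{eq:fixed3-distance-upper} to get $C_0+16T\le 24T$, and identifies $\Omega_T$ with $D$ plus a collar so that the double is $\Sph^3$; your side remark on $T\ge4$ is unnecessary, since $\sqrt{129/8}(r-2)+\sqrt2\pi r<16r$ holds for all $r\ge2$, but this does not affect the argument.
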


\begin{proof}
The inward unit normal to $\Omega_T$ is $-N^{-1}\partial_r$;
therefore $\mathrm{II}=(2N)^{-1}\partial_rh_r$. 

Its two principal curvatures are
$\left.a_r/(Na)\right|_{r=T}$ and $7/(8TN)$.
By \eqref{eq:direct-a-derivatives}, both are at least
$7/(8TN)$ away from the poles and equator.
Smoothness extends this bound over $\Sigma_T$, proving
\eqref{eq:compact-fixed-boundary}. 

For fixed $T$, the positive
function $N$ has a finite maximum on $\Sigma_T$; a bound uniform
in $T$ is not needed. The curves used in
\eqref{eq:fixed3-distance-upper} stay inside the truncation at
their terminal radius. Their lengths are less than
$C_0+16T\leq24T$, proving the distance bound, including points
of $D_0$.

The region $\Omega_T$ is obtained from the closed three-ball $D$ by attaching a finite outward collar. Reparametrizing this collar gives a diffeomorphism $\Omega_T\cong D$ compatible with boundary collars; applying it to both copies identifies the double with the double of $D$, hence with $\Sph^3$.
\end{proof}

\begin{theorem}\label{thm:main-unboundedness}
There exists a sequence of smooth metrics $g_j^{\mathrm c}$ on
$\Sph^3$ such that
\begin{align}
 \Ric_{g_j^{\mathrm c}}>0,\qquad
 \diam(\Sph^3,g_j^{\mathrm c})<\tfrac12,\qquad
 \lim_{j\to\infty}\int_{\Sph^3}
       \Sc_{g_j^{\mathrm c}}\dd\mu_{g_j^{\mathrm c}}=+\infty.
 \label{eq:compact-main-conclusion}
\end{align}
\end{theorem}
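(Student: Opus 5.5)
The plan is to double the truncations $\Omega_T$ of Proposition~\ref{prop:simple-truncations}, smooth the result with Perelman's gluing theorem, and then rescale. Fix $T_j\to\infty$ with $T_j\ge\max\{4,C_0/8\}$. Take two copies of $(\Omega_{T_j},g)$ and identify their boundaries by the identity map of $\Sigma_{T_j}$. By Proposition~\ref{prop:simple-truncations}, the underlying manifold is $\Sph^3$.

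\textbf{Gluing.} The two induced boundary metrics coincide, both being $h_{T_j}$. Each copy has strictly positive Ricci curvature. By \eqref{eq:compact-fixed-boundary}, the sum of the second fundamental forms is
\[
\II_{\Sigma_{T_j}}+\II_{\Sigma_{T_j}}\ge\frac{7}{4T_j\max N}h_{T_j}>0,
\]
with the inward normal convention on each side. This is exactly the hypothesis of Theorem~\ref{thm:unified-positive-Ricci-gluing}. Applying it gives a smooth metric $\widehat g_j$ on $\Sph^3$ with $\Ric>0$. We apply it in a collar neighborhood of $\Sigma_{T_j}$ inside $\{T_j/2<r\le T_j\}$ on each side, say $\{T_j-\epsilon_j\le r\le T_j\}$.

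\textbf{Keeping the estimates through the gluing.} We need three things.
\begin{itemize}
\item \emph{Scalar curvature.} The metric $\widehat g_j$ equals $g$ on $\Omega_{T_j-\epsilon_j}$ in each copy. Since $\Sc>0$ everywhere, \eqref{eq:fixed3-scalar-radial-density} gives
\[
\int_{\Sph^3}\Sc_{\widehat g_j}\,\dd\mu\ \ge\ \frac\pi2\int_{T_j/2}^{T_j-\epsilon_j}r^{1/32}\,\dd r\ \ge\ c\,T_j^{1+1/32}.
\]
\item \emph{Diameter.} By \eqref{eq:compact-fixed-intrinsic-distance}, every point of either copy lies within intrinsic distance $24T_j$ of $p_0$, using paths that stay in the region $r\le T_j$. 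So the diameter of the doubled metric is at most $96T_j$, plus a small correction from the modified collar.
\item \emph{Control of the modification.} The gluing theorem must change lengths only by a controlled factor, and must not destroy the volume of the collar. The gluing theorem is local and can be performed in an arbitrarily thin collar. It yields metrics $C^0$-close to the piecewise-smooth doubled metric, so we may assume lengths change by a factor of at most $2$. The diameter bound then becomes at most $200T_j$.
\end{itemize}

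\textbf{Rescaling.} Set $g_j^{\mathrm c}=(400T_j)^{-2}\widehat g_j$. Then $\diam<\tfrac12$ and $\Ric>0$. The scalar curvature integral scales like length to the first power in dimension three, so
\[
\int_{\Sph^3}\Sc_{g_j^{\mathrm c}}\,\dd\mu=(400T_j)^{-1}\int\Sc_{\widehat g_j}\,\dd\mu\ \ge\ c'\,T_j^{1/32}\to\infty.
\]

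\textbf{Main obstacle.} The delicate step is quoting Theorem~\ref{thm:unified-positive-Ricci-gluing} with enough control. We need the glued metric to agree with the original outside a prescribed thin collar and to remain $C^0$-close there, so that both the diameter bound and the lower bound on the scalar curvature integral survive. If the theorem only provides agreement outside a neighborhood, the scalar curvature bound is automatic because we discard the collar and $\Sc>0$ elsewhere.

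For the diameter, I would fall back on a cruder argument. Any point of the modified collar lies within a bounded $\widehat g_j$-distance of the unmodified region, and this distance is at most a constant depending on $j$ that can be made less than $T_j$ by shrinking the collar. This again gives $\diam\le CT_j$ with a universal $C$.
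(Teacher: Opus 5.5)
Your proposal is correct and follows the paper's proof essentially step for step. You double $\Omega_T$, apply Theorem~\ref{thm:unified-positive-Ricci-gluing} with the smoothing localized near $\Sigma_T$ and kept $C^0$-close to the glued metric, bound the diameter by $O(T)$ using paths through $p_0$, use an unchanged region together with $\Sc>0$ for the scalar-curvature lower bound, and rescale by a multiple of $T^{-2}$; the paper differs only in constants, using the region $\{T/4\le r\le T/2\}$, the bound $\tfrac12 g_T^0\le\widetilde g_T\le 2g_T^0$, and the factor $(512T)^{-2}$. The obstacle you flag is covered exactly by the localization and $C^0$-approximation clauses of Theorem~\ref{thm:unified-positive-Ricci-gluing}, so your fallback diameter argument is unnecessary, and it would not be justified on its own without $C^0$ control across the collar.
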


\begin{proof}
Let $M_T$ be the double of two copies $\Omega_T^+$ and
$\Omega_T^-$, and write $g_T^0$ for their continuous glued
metric. For each fixed $T>8$, both pieces have positive Ricci
curvature, their compact boundaries are isometric, and
Proposition~\ref{prop:simple-truncations} gives a strictly
positive sum of their inward boundary forms. Apply
Theorem~\ref{thm:unified-positive-Ricci-gluing}, with smoothing
supported in the two copies of $\{3T/4<r\leq T\}$. Choose its
$C^0$ approximation sufficiently close that the resulting smooth
metric satisfies
\begin{align}
 \Ric_{\widetilde g_T}&>0,\qquad
 \tfrac12g_T^0\leq\widetilde g_T\leq2g_T^0,\nonumber\\
 \widetilde g_T&=g\quad\text{on both copies of }\Omega_{T/2}.
 \label{eq:compact-fixed-smoothed}
\end{align}

Assume also $T\geq C_0/8$. Any two points in one copy can be
joined through $p_0$ by curves of total length less than $48T$.
For points in different copies, fix a point of $\Sigma_T$ and
join each point to it in its own copy by curves of length less
than $48T$. These curves give
\begin{align}
 \diam(M_T,\widetilde g_T)\leq96\sqrt2\,T,
 \label{eq:compact-fixed-diameter}
\end{align}
using the upper tensor bound in
\eqref{eq:compact-fixed-smoothed}.

Both copies of $\{T/4\leq r\leq T/2\}$ retain the original
metric. The positivity of scalar curvature on their complement
and \eqref{eq:fixed3-scalar-radial-density} yield
\begin{align}
 \int_{M_T}\Sc_{\widetilde g_T}\dd\mu_{\widetilde g_T}
 &\geq\pi\int_{T/4}^{T/2}r^{1/32}\dd r
 \geq\frac{\pi T}{4}(T/4)^{1/32}.
 \label{eq:compact-fixed-scalar}
\end{align}
This lower bound is taken entirely in unchanged regions; it
requires no curvature convergence under $C^0$ approximation.
Define
\begin{align}
 G_T=(512T)^{-2}\widetilde g_T.
 \label{eq:simple-compact-normalization}
\end{align}
The scaling identities give
\begin{align}
 \Ric_{G_T}&>0,\qquad
 \diam(M_T,G_T)\leq\frac{3\sqrt2}{16}<\tfrac12,\nonumber\\
 \int_{M_T}\Sc_{G_T}\dd\mu_{G_T}
 &\geq\frac\pi{2048}(T/4)^{1/32}.
 \label{eq:compact-fixed-estimates}
\end{align}
Choose any sequence $T_j\to\infty$ with
$T_j\geq\max\{16,C_0/8\}$, and pull $G_{T_j}$ back to
$\Sph^3$ using the diffeomorphisms already proved. These
pullbacks define $g_j^{\mathrm c}$. They are complete by
compactness, their diameter is less than $1/2$, and the displayed
scalar-curvature integrals tend to infinity. Thus their open
radius-one balls equal the whole sphere for every center,
proving all the assertions.
\end{proof}

\begin{proposition}\label{prop:volume-collapse}
The fixed metric of Theorem~\ref{thm:fixed3-main} satisfies
\begin{align}
 \lim_{s\to\infty}s^{-3}\Vol_gB_p^g(s)=0
 \qquad\text{for every }p\in\R^3.
 \label{eq:volume-global-limit}
\end{align}
For the metrics in Corollary~\ref{cor:fixed3-local-rescaling} and
Theorem~\ref{thm:main-unboundedness}, respectively,
\begin{align}
 \lim_{j\to\infty}
 \Vol_{g_j^{\mathrm{nc}}}B_p^{g_j^{\mathrm{nc}}}(1)=0,\qquad
 \lim_{j\to\infty}\Vol_{g_j^{\mathrm c}}(\Sph^3)=0.
 \label{eq:volume-local-limits}
\end{align}
\end{proposition}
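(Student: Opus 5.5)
The plan is to estimate volumes directly from the explicit form \eqref{eq:log-metric} on $r\ge2$. The lower bound on $\int NK_r$ came from $N$ growing only like $q^{1/8}$, whereas the area of $\Sigma_r$ stays of order $r^{1+\alpha}$ times a bounded factor.

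\textbf{Volume of the slabs.} On either hemisphere, $\dd A_r=r^{1+\alpha}tq^{-1/2}\dd t\dd\phi$ with $\alpha=7/8$. Hence
\begin{align*}
 \Vol_g\{2\le r\le R\}=4\pi\int_2^R r^{15/8}\int_0^1\frac{N\,t}{\sqrt q}\dd t\dd r .
\end{align*}
We have $N\le C(1+q)^{1/8}\le C(1+r^{1/4})^{1/8}$, and \eqref{eq:qbound} gives $q\ge\frac12(1-t^2)$. Therefore
\begin{align*}
 \int_0^1 t\,q^{-1/2}\dd t\le\sqrt2\int_0^1 t(1-t^2)^{-1/2}\dd t=\sqrt2 .
\end{align*}
This yields $\Vol_g\{2\le r\le R\}\le C R^{23/8+1/32}$. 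The exponent is $95/32<3$, so the slab volume is $o(R^3)$.

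\textbf{Containment of balls.} Next I show $B_p^g(s)\subset D_0\cup\{r\le R(s)\}$ with $R(s)\le Cs+C$. The key point is that $N\ge4$, so $g\ge16\,\dd r^2$ and $r$ is $\frac14$-Lipschitz on $r\ge2$. Any curve from $p$ leaving $\{r\le R\}$ must therefore have length at least $4(R-r(p)-2)$, roughly. Consequently $\Vol_gB_p^g(s)\le\Vol_g D_0+C(s+C)^{95/32}$, and \eqref{eq:volume-global-limit} follows.

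\textbf{Rescaled and compact metrics.} For $g_j^{\mathrm{nc}}=s_j^{-2}g$ we have $\Vol B_p^{g_j^{\mathrm{nc}}}(1)=s_j^{-3}\Vol_gB_p^g(s_j)\to0$ by the first part. For $g_j^{\mathrm c}$, \eqref{eq:compact-fixed-smoothed} gives $\widetilde g_T\le2g_T^0$, so $\Vol_{\widetilde g_T}(M_T)\le2^{3/2}\cdot2\Vol_g(\Omega_T)\le C(1+T^{95/32})$. Multiplying by the scaling factor $(512T)^{-3}$ gives a bound $CT^{-1/32}\to0$.

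\textbf{Main obstacle.} The only delicate point is the integrability of $q^{-1/2}$ near the equator. It is handled by the lower bound \eqref{eq:qbound}, because $(1-t^2)^{-1/2}t$ is integrable on $[0,1]$. Everything else is bookkeeping of exponents: the essential fact is $1+\alpha+\lambda/8<2$.
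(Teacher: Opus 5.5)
Your proposal is correct and follows the paper's proof. It uses the same four steps: a slice-density bound from $\dd A_r=r^{15/8}tq^{-1/2}\dd t\dd\phi$ together with \eqref{eq:qbound}, the containment $B_p^g(s)\subset D_0\cup\{2\le r<2+(s+d_g(p,p_0))/4\}$ from $g\ge16\dd r^2$, the scaling identity, and $\widetilde g_T\le2g_T^0$ for the compact case. The only difference is that the paper absorbs $N$ into the $q^{-1/2}$ factor via $N^2/q\le64/(1-t^2)$ to get $\int_{\Sigma_r}N\dd A_r\le32\pi r^{15/8}$, whereas your separate bound $N\le Cr^{1/32}$ costs a harmless extra factor. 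Also, your resulting exponent should be $23/8+1/32=93/32$, not $95/32$, which changes nothing.
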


\begin{proof}
Since $(1+q)^{1/4}\leq1+q$, equation~\eqref{eq:qbound} gives
\begin{align*}
 \frac{N^2}{q}
 \leq\frac{129}{8q}+\frac18
 \leq\frac{64}{1-t^2}
 \qquad(0<t<1).
\end{align*}
Using $\dd A_r=r^{15/8}tq^{-1/2}\dd t\dd\phi$ on each
hemisphere and $\int_0^1t(1-t^2)^{-1/2}\dd t=1$, we obtain
\begin{align}
 \int_{\Sigma_r}N\dd A_r\leq32\pi r^{15/8}.
 \label{eq:log-volume-density}
\end{align}
For fixed $p$, set $d_0=d_g(p,p_0)$.
The exterior bound $g\geq16\dd r^2$ implies
$4(r(y)-2)\leq d_g(p_0,y)$ for $y\notin D_0$.
Hence $B_p^g(s)\subset D_0\cup\{2\leq r<2+(s+d_0)/4\}$,
and integration of \eqref{eq:log-volume-density} gives
\begin{align}
 \Vol_g B_p^g(s)
 \leq\Vol_g D_0+\frac{256\pi}{23}
       \bigl(2+(s+d_0)/4\bigr)^{23/8}.
 \label{eq:fixed3-ball-volume-bound}
\end{align}
Division by $s^3$ proves \eqref{eq:volume-global-limit}.
The identity
\begin{align*}
 \Vol_{g_j^{\mathrm{nc}}}B_p^{g_j^{\mathrm{nc}}}(1)
 =s_j^{-3}\Vol_g B_p^g(s_j)
\end{align*}
then proves the noncompact limit in \eqref{eq:volume-local-limits}.

For $T\geq2$, the same density estimate gives
\begin{align*}
 \Vol_g\Omega_T
 \leq\Vol_g D_0+\frac{256\pi}{23}T^{23/8}.
\end{align*}
There are two copies of $\Omega_T$, and
$\widetilde g_T\leq2g_T^0$ multiplies their volume densities by
at most $2^{3/2}$. Therefore
\eqref{eq:simple-compact-normalization} gives
\begin{align}
 \Vol_{G_T}M_T
 \leq\frac{2^{5/2}}{512^3}
       \left(\Vol_gD_0+\frac{256\pi}{23}\right)T^{-1/8}.
 \label{eq:simple-compact-volume-final}
\end{align}
This proves the compact limit in \eqref{eq:volume-local-limits}.
\end{proof}

\section{Appendix}\label{Appendix-curvature}

\begin{proposition}\label{prop:two-variable-Ricci}
Let $U\subset\R^2$ be open, with coordinates $(r,t)$, and let
$N,a,b:U\to(0,\infty)$ be smooth. On $U\times\Sph^1$, where
$\phi$ has period $2\pi$, define
\begin{align}
 g=N^2\dd r^2+a^2\dd t^2+b^2\dd\phi^2.
 \label{eq:two-variable-metric}
\end{align}
Put
\begin{align*}
 e_0=N^{-1}\partial_r,\qquad
 e_1=a^{-1}\partial_t,\qquad
 e_2=b^{-1}\partial_\phi,
\end{align*}
and write $R_{ij}=\Ric_g(e_i,e_j)$. Then
\begin{align}
 R_{00}
 &=-\frac1{Na}\left\{
       \partial_r\left(\frac{a_r}{N}\right)
       +\partial_t\left(\frac{N_t}{a}\right)\right\}
   -\frac1{bN^2}\left(
       b_{rr}-\frac{N_r}{N}b_r+\frac{NN_t}{a^2}b_t
     \right),
 \label{eq:two-variable-R00}\\
 R_{11}
 &=-\frac1{Na}\left\{
       \partial_r\left(\frac{a_r}{N}\right)
       +\partial_t\left(\frac{N_t}{a}\right)\right\}
   -\frac1{ba^2}\left(
       b_{tt}+\frac{aa_r}{N^2}b_r-\frac{a_t}{a}b_t
     \right),
 \label{eq:two-variable-R11}\\
 R_{01}
 &=-\frac1{bNa}\left(
       b_{rt}-\frac{N_t}{N}b_r-\frac{a_r}{a}b_t
     \right),
 \label{eq:two-variable-R01}\\
 R_{22}
 &=-\frac1{bN^2}\left(
       b_{rr}-\frac{N_r}{N}b_r+\frac{NN_t}{a^2}b_t
     \right)
   -\frac1{ba^2}\left(
       b_{tt}+\frac{aa_r}{N^2}b_r-\frac{a_t}{a}b_t
     \right),
 \label{eq:two-variable-R22}\\
 R_{02}&=R_{12}=0.
 \label{eq:two-variable-R02-R12}
\end{align}
\end{proposition}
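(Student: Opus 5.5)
I will compute the Ricci tensor with Cartan's structure equations in the orthonormal coframe
\[
\omega^0=N\,\dd r,\qquad \omega^1=a\,\dd t,\qquad \omega^2=b\,\dd\phi .
\]
No coefficient depends on $\phi$, and the metric is diagonal. Hence the reflection $\phi\mapsto-\phi$ is an isometry fixing $e_0,e_1$ and reversing $e_2$. Any Ricci component with exactly one index equal to $2$ must therefore change sign under this isometry, so it vanishes; this already gives $R_{02}=R_{12}=0$. It remains to compute the four components $R_{00},R_{11},R_{22},R_{01}$.

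\textbf{Connection forms.} From
\[
\dd\omega^0=\frac{N_t}{aN}\,\omega^1\wedge\omega^0,\qquad
\dd\omega^1=\frac{a_r}{Na}\,\omega^0\wedge\omega^1,\qquad
\dd\omega^2=\frac{b_r}{Nb}\,\omega^0\wedge\omega^2+\frac{b_t}{ab}\,\omega^1\wedge\omega^2,
\]
the torsion-free, skew conditions $\dd\omega^i=-\omega^i{}_j\wedge\omega^j$ determine
\[
\omega^0{}_1=\frac{N_t}{aN}\,\omega^0-\frac{a_r}{Na}\,\omega^1,\qquad
\omega^2{}_0=\frac{b_r}{Nb}\,\omega^2,\qquad
\omega^2{}_1=\frac{b_t}{ab}\,\omega^2 .
\]
I will check each of these directly against the three structure equations.

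\textbf{Curvature forms.} Next I compute
\[
\Omega^i{}_j=\dd\omega^i{}_j+\omega^i{}_k\wedge\omega^k{}_j
\]
and read off $R^i{}_{jkl}$. The component $\Omega^0{}_1$ contains $K_{01}\,\omega^0\wedge\omega^1$, where
\[
K_{01}=-\frac{1}{Na}\Bigl\{\partial_r\Bigl(\frac{a_r}{N}\Bigr)+\partial_t\Bigl(\frac{N_t}{a}\Bigr)\Bigr\};
\]
this is the Gauss curvature of the $(r,t)$ metric $N^2\dd r^2+a^2\dd t^2$. Expanding $\Omega^2{}_0$ and $\Omega^2{}_1$ gives the sectional curvatures
\[
K_{02}=-\frac{1}{bN^2}\Bigl(b_{rr}-\frac{N_r}{N}b_r+\frac{NN_t}{a^2}b_t\Bigr),\qquad
K_{12}=-\frac{1}{ba^2}\Bigl(b_{tt}+\frac{aa_r}{N^2}b_r-\frac{a_t}{a}b_t\Bigr).
\]
These two expressions are the Hessian of $b$ in the $(r,t)$ metric evaluated on $e_0$ and $e_1$, divided by $b$; this is the standard warped-product formula $K(X,e_2)=-\Hess b(X,X)/b$. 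Likewise $\Omega^2{}_0$ has a $\omega^1\wedge\omega^2$ term, which supplies
\[
R_{01}=-\frac{\Hess b(e_0,e_1)}{b},
\]
and expanding this gives the stated expression for $R_{01}$.

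\textbf{Assembly.} With only these sectional curvatures and the single mixed term, the Ricci components are
\[
R_{00}=K_{01}+K_{02},\qquad R_{11}=K_{01}+K_{12},\qquad R_{22}=K_{02}+K_{12},
\]
and these match the stated formulas.

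The main obstacle is expanding the Hessian of $b$ with the Christoffel symbols of $N^2\dd r^2+a^2\dd t^2$ and keeping the signs and factors consistent. For $\Hess b(e_0,e_0)$ the symbols are $\Gamma^r_{rr}=N_r/N$ and $\Gamma^t_{rr}=-NN_t/a^2$. For $\Hess b(e_1,e_1)$ they are $\Gamma^r_{tt}=-aa_r/N^2$ and $\Gamma^t_{tt}=a_t/a$. For $\Hess b(e_0,e_1)$ they are $\Gamma^r_{rt}=N_t/N$ and $\Gamma^t_{rt}=a_r/a$. I will verify each expanded term against the displayed formulas. As a final sanity check, setting $N\equiv1$ with $a,b$ independent of $t$ should reproduce the familiar radial component $-a_{rr}/a-b_{rr}/b$.
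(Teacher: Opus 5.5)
Your proposal is correct, and its intermediate claims hold. The three connection forms satisfy the structure equations. Also $\Omega^0{}_1=\dd\omega^0{}_1=K_{01}\,\omega^0\wedge\omega^1$, where $K_{01}$ is the Gauss curvature of $\bar g=N^2\dd r^2+a^2\dd t^2$. For $j,k\in\{0,1\}$, the coefficient of $\omega^k\wedge\omega^2$ in $\Omega^2{}_j$ is exactly $\Hess_{\bar g}b(e_j,e_k)/b$, which gives $K_{02}$, $K_{12}$ and $R_{01}$ with the signs you state.

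Your route differs from the paper's. The paper lists all Christoffel symbols of $g$ in the coordinates $(r,t,\phi)$ and substitutes them into the coordinate formula for $\Ric$. It then divides the $rr$, $tt$, $rt$, $\phi\phi$ components by $N^2$, $a^2$, $Na$, $b^2$, and reads off the vanishing $r\phi$ and $t\phi$ components from the same computation. You instead treat $g$ as the warped product $\bar g+b^2\dd\phi^2$ in an orthonormal coframe. This explains the shape of the answer: $R_{00}$ and $R_{11}$ share the base curvature, $R_{22}=-\Delta_{\bar g}b/b$, and $R_{01}=-\Hess_{\bar g}b(e_0,e_1)/b$ is the only off-diagonal entry. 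It also reduces the verification to one surface-curvature formula plus three Hessian entries, using only the $(r,t)$ Christoffel symbols, which are exactly those in the paper's list. The paper's computation is more mechanical but needs no moving-frame or warped-product facts.

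One small repair is needed in your symmetry argument. The reflection $\phi\mapsto-\phi$ fixes only the points with $\phi\in\{0,\pi\}$, so by itself it shows only that $R_{02}$ and $R_{12}$ are odd in $\phi$. Any of the following closes this:
\begin{itemize}
\item add that $R_{02}$ and $R_{12}$ are independent of $\phi$, since the frame is invariant under rotations of $\phi$;
\item use the reflections $\phi\mapsto2\phi_0-\phi$ for every $\phi_0$;
\item argue directly: by pair symmetry, every curvature component with exactly one index equal to $2$ is a coefficient of $\Omega^0{}_1$ against some $\omega^k\wedge\omega^2$, and these vanish because $\Omega^0{}_1$ is a multiple of $\omega^0\wedge\omega^1$.
\end{itemize}
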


\begin{proof}
Since the metric is diagonal and independent of $\phi$, the nonzero
Christoffel symbols, up to symmetry in the lower indices, are
\begin{align}
 &\Gamma^r_{rr}=\frac{N_r}{N},\qquad
 \Gamma^r_{rt}=\frac{N_t}{N},\qquad
 \Gamma^r_{tt}=-\frac{aa_r}{N^2},\qquad
 \Gamma^r_{\phi\phi}=-\frac{bb_r}{N^2},\nonumber\\
 &\Gamma^t_{rr}=-\frac{NN_t}{a^2},\qquad
 \Gamma^t_{rt}=\frac{a_r}{a},\qquad
 \Gamma^t_{tt}=\frac{a_t}{a},\qquad
 \Gamma^t_{\phi\phi}=-\frac{bb_t}{a^2},\nonumber\\
 &\Gamma^\phi_{r\phi}=\frac{b_r}{b},\qquad
 \Gamma^\phi_{t\phi}=\frac{b_t}{b}.
 \label{eq:two-variable-Christoffel}
\end{align}

Substitute these Christoffel symbols into
\begin{align*}
 (\Ric_g)_{ij}
 =\partial_k\Gamma^k_{ij}-\partial_j\Gamma^k_{ik}
  +\Gamma^k_{ij}\Gamma^\ell_{k\ell}
  -\Gamma^\ell_{ik}\Gamma^k_{j\ell},
\end{align*}
where repeated indices are summed over $r,t,\phi$.
Dividing the $rr$, $tt$, $rt$, and $\phi\phi$ components by
$N^2$, $a^2$, $Na$, and $b^2$, respectively, yields
\eqref{eq:two-variable-R00}--\eqref{eq:two-variable-R22}.
The $r\phi$ and $t\phi$ components vanish, giving
\eqref{eq:two-variable-R02-R12}.
\end{proof}

\begin{lemma}\label{lem:identical-hemisphere-smoothness}
Let $N,a,b$ be smooth positive functions on
$[1,\infty)\times(0,1)$, jointly smooth up to $r=1$.
Parametrize the two hemispheres of $\Sph^2$ by
\begin{align*}
 (y_1,y_2,y_3)=(\pm\sqrt{1-t^2},t\cos\phi,t\sin\phi),
 \qquad \phi\in\R/(2\pi\mathbb Z),
\end{align*}
and use the same functions $N,a,b$ on both copies of $0<t<1$.
Then
\begin{align*}
 g=N(r,t)^2\dd r^2+a(r,t)^2\dd t^2+b(r,t)^2\dd\phi^2
\end{align*}
extends to a smooth positive definite metric on
$[1,\infty)\times\Sph^2$ if the following endpoint
conditions hold. For every finite $R>1$, there are smooth positive
functions $N_0,a_0,b_0,N_1,a_1,b_1$ of $(r,z)$ on a neighborhood
of $[1,R]\times\{0\}$ in $[1,\infty)\times[0,\infty)$ such that,
for $1\le r\le R$, near $t=0$,
\begin{align}
 N(r,t)&=N_0(r,t^2),\qquad a(r,t)=a_0(r,t^2),\nonumber\\
 b(r,t)&=t\,b_0(r,t^2),\qquad a_0(r,0)=b_0(r,0)>0,
 \label{eq:identical-hemisphere-poles}
\end{align}
and, near $t=1$,
\begin{align}
 N(r,t)&=N_1(r,1-t^2),\qquad b(r,t)=b_1(r,1-t^2),\nonumber\\
 \frac{\sqrt{1-t^2}}{t}\,a(r,t)&=a_1(r,1-t^2).
 \label{eq:identical-hemisphere-equator}
\end{align}
All endpoint neighborhoods may depend on $R$, and all smoothness
conditions are joint in $r,z$, including their boundary values.
No equality between $a_1(r,0)$ and $b_1(r,0)$ is required.
\end{lemma}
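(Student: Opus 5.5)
The interior of each hemisphere needs no work. For $0<t<1$ the coordinates $(r,t,\phi)$ form a smooth chart on $[1,\infty)\times(\text{open hemisphere minus pole})$, and there $g$ is a diagonal metric with smooth positive coefficients. So it only remains to check smoothness and positive definiteness in suitable Cartesian charts near the pole $t=0$ of each hemisphere and near the equator $t=1$. Both checks are local in $r$, so I fix $R$ and work on $[1,R]$, where the endpoint functions $N_0,a_0,b_0,N_1,a_1,b_1$ are available. The two hemispheres carry identical functions, so at the poles one computation handles both. At the equator the two copies must be compared using the reflection $y_1\mapsto-y_1$.

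\textbf{Poles.} Near $t=0$ I use the chart $(r,x_2,x_3)=(r,t\cos\phi,t\sin\phi)$, so $t^2=x_2^2+x_3^2$. Then $N^2\,dr^2=N_0(r,x_2^2+x_3^2)^2\,dr^2$ is smooth. Using $dt^2+t^2d\phi^2=dx_2^2+dx_3^2$ and $t^2 dt^2=(x_2dx_2+x_3dx_3)^2$, I write
\[
a^2dt^2+b^2d\phi^2=b_0^2\,(dx_2^2+dx_3^2)+\bigl(a_0^2-b_0^2\bigr)\,dt^2 .
\]
The last term is $\frac{a_0^2-b_0^2}{t^2}\,(x_2dx_2+x_3dx_3)^2$. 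Since $a_0^2-b_0^2$ is a smooth function of $(r,z)$ vanishing at $z=0$, Hadamard's lemma gives $a_0^2-b_0^2=z\,c(r,z)$ with $c$ smooth. Hence this term equals $c(r,x_2^2+x_3^2)(x_2dx_2+x_3dx_3)^2$, which is smooth. At $t=0$ the metric is $N_0^2\,dr^2+b_0^2(dx_2^2+dx_3^2)$, which is positive definite. Positivity on the punctured neighborhood already holds, so positivity holds on a whole neighborhood, by continuity at the origin.

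\textbf{Equator.} Near $t=1$ I use the chart $(r,y_1,\phi)$ with $y_1=\pm\sqrt{1-t^2}$; it is a genuine smooth chart on $\Sph^2$ across $y_1=0$, with $\phi$ in a local circle chart. Here $1-t^2=y_1^2$, so $N=N_1(r,y_1^2)$ and $b=b_1(r,y_1^2)$ are smooth even functions of $y_1$. From $t\,dt=-y_1\,dy_1$ I get $dt^2=\frac{y_1^2}{t^2}dy_1^2$, hence
\[
a^2dt^2=\Bigl(\frac{\sqrt{1-t^2}}{t}\,a\Bigr)^2dy_1^2=a_1(r,y_1^2)^2\,dy_1^2 .
\]
This is smooth and positive, and no compatibility between $a_1$ and $b_1$ is needed, as the statement remarks. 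Both hemispheres give the same expression in $y_1^2$, so the two pieces patch across $y_1=0$ into one smooth metric. Its matrix there is $\mathrm{diag}(N_1^2,a_1^2,b_1^2)$, which is positive definite.

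\textbf{Main obstacle.} The only delicate point is the off-diagonal term at the pole: showing that $(a_0^2-b_0^2)/t^2$ is smooth jointly in $r$ and up to $r=1$. I handle it with the parametrized Hadamard lemma, $c(r,z)=\int_0^1\partial_z(a_0^2-b_0^2)(r,sz)\,ds$. This formula preserves joint smoothness, including at the boundary $r=1$, because the integrand is jointly smooth there.
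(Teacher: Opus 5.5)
Your proof is correct and follows the paper's argument essentially step for step. At the poles you use the Cartesian chart $(t\cos\phi,t\sin\phi)$ and the parametrized Hadamard quotient $\int_0^1\partial_z(a_0^2-b_0^2)(r,sz)\,ds$ to make the $(x_2\,dx_2+x_3\,dx_3)^2$ coefficient smooth; at the equator the signed coordinate $y_1$ turns $a^2\,dt^2$ into $a_1(r,y_1^2)^2\,dy_1^2$ identically on both hemispheres, exactly as in the paper.
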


\begin{proof}
Near either pole use the smooth
coordinates $X=t\cos\phi$, $Y=t\sin\phi$, and put
$z=X^2+Y^2=t^2$. On the punctured coordinate disk,
\begin{align*}
 \dd X^2+\dd Y^2&=\dd t^2+t^2\dd\phi^2,\qquad
 X\dd X+Y\dd Y=t\dd t.
\end{align*}
It follows from \eqref{eq:identical-hemisphere-poles} that
\begin{align}
 g={}&N_0(r,z)^2\dd r^2+b_0(r,z)^2(\dd X^2+\dd Y^2)\nonumber\\
 &+\frac{a_0(r,z)^2-b_0(r,z)^2}{z}
                         (X\dd X+Y\dd Y)^2.
 \label{eq:identical-hemisphere-Cartesian}
\end{align}
The numerator in the quotient vanishes at $z=0$, and the quotient
has the smooth extension
\begin{align*}
 \int_0^1\partial_z(a_0^2-b_0^2)(r,\lambda z)\dd\lambda.
\end{align*}
Thus every coefficient in
\eqref{eq:identical-hemisphere-Cartesian} is smooth. At the pole
the last term vanishes, and the metric is
\begin{align*}
 N_0(r,0)^2\dd r^2+b_0(r,0)^2(\dd X^2+\dd Y^2),
\end{align*}
which is positive definite. This argument applies to both poles
because the functions used on the two hemispheres are identical.

At the equator take the signed coordinate $v=y_1$, so that
$t=\sqrt{1-v^2}$ and
\begin{align*}
 \dd t=-\frac{v}{\sqrt{1-v^2}}\dd v.
\end{align*}
Here $(r,v,\phi)$ is a smooth coordinate system after restricting
$\phi$ to any circle coordinate chart. Substitution of
\eqref{eq:identical-hemisphere-equator} gives, for $v\ne0$,
\begin{align}
 g=N_1(r,v^2)^2\dd r^2+a_1(r,v^2)^2\dd v^2
                         +b_1(r,v^2)^2\dd\phi^2.
 \label{eq:identical-hemisphere-signed}
\end{align}
The same formula holds on both sides of the equator, and its
coefficients extend smoothly and positively to $v=0$. Together
with positivity on $0<t<1$ and the polar calculation, this proves
sufficiency. The extensions agree on overlaps since they agree on
the dense regular coordinate region. In particular the extension
is unique. All calculations are joint with $r$ and require no
condition at infinity.
\end{proof}

\begin{theorem}\label{thm:unified-positive-Ricci-gluing}
Let $(M_1,g_1)$ and $(M_2,g_2)$ be smooth three-manifolds with compact
nonempty isometric boundaries.  Fix a boundary isometry and use it to identify
all boundary tensors in the inequality below.  Assume $\Ric_{g_i}>0$.
Use inward unit normals $\vec{n}$ and the convention \eqref{eq:boundary-sign-definition}.
If
\begin{align}
 \mathrm{II}_{\partial M_1}+\mathrm{II}_{\partial M_2}>0,
 \label{eq:unified-gluing-II-condition}
\end{align}
then the continuous metric obtained by identifying the boundaries can be
replaced by a smooth metric of positive Ricci curvature.  The replacement
can be confined to an arbitrarily small neighborhood of the gluing
hypersurface, leaves the original metrics unchanged outside that
neighborhood, and can be chosen arbitrarily close in $C^0$ to the continuous
glued metric.
\end{theorem}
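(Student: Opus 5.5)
The plan is to reduce Theorem~\ref{thm:unified-positive-Ricci-gluing} to Perelman's gluing lemma \cite{Perelman1997Betti} and make explicit the local smoothing that lies behind it. Write the continuous glued metric near the common boundary $\Sigma$ in Fermi-type coordinates $(x,s)$, with $s\in(-\varepsilon,\varepsilon)$ the signed distance to $\Sigma$. Take $s>0$ in $M_1$ and $s<0$ in $M_2$, so that $g=\dd s^2+h_s$ on each side. The metric $h_s$ is continuous at $s=0$, where $h_0$ is the common boundary metric. Its first derivative jumps: with inward normals and convention~\eqref{eq:boundary-sign-definition}, $\partial_sh_s|_{0^+}=-2\II_{\partial M_1}$ and $\partial_sh_s|_{0^-}=2\II_{\partial M_2}$. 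Hence the jump satisfies
\begin{align*}
 \partial_sh|_{0^+}-\partial_sh|_{0^-}=-2\bigl(\II_{\partial M_1}+\II_{\partial M_2}\bigr)<0
\end{align*}
by \eqref{eq:unified-gluing-II-condition}. So $\partial_sh_s$ drops by a negative-definite amount across $\Sigma$. This is the distributional sign that favours positive Ricci curvature: in the Gauss--Codazzi decomposition the dominant terms of $\Ric(\partial_s,\partial_s)$ and of the tangential Ricci components contain $-\tfrac12\partial_s^2h$, which becomes a positive delta mass.

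The smoothing step replaces $h_s$ on $|s|<\delta$ by $\widetilde h_s$, chosen so that $\partial_s\widetilde h_s$ interpolates monotonically (in the sense of quadratic forms) between the one-sided values of $\partial_sh$ at $s=\pm\delta$. Concretely, set
\begin{align*}
 \widetilde h_s=h_0+\int_0^s\Bigl(\chi(\sigma/\delta)\,\partial_\sigma h^{+}+(1-\chi(\sigma/\delta))\,\partial_\sigma h^{-}\Bigr)\dd\sigma ,
\end{align*}
with $\chi$ a smooth step function, and $h^\pm$ smooth extensions of the two one-sided families across $s=0$. Then $\widetilde h$ differs from $h$ by $O(\delta)$ in $C^0$ and has bounded first derivatives. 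Its second derivative splits as
\begin{align*}
 \partial_s^2\widetilde h_s=-\delta^{-1}\chi'(s/\delta)\cdot 2(\II_{\partial M_1}+\II_{\partial M_2})+O(1),
\end{align*}
which is of size $\delta^{-1}$ and has the favourable sign. Every other term in the Ricci tensor involves at most first $s$-derivatives and tangential derivatives, so it stays $O(1)$. In the region where $\chi'$ is bounded below, the large positive term then dominates both the $\Ric(\partial_s,\partial_s)$ component and the tangential components.

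The main obstacle is the set near the ends of the interpolation, where $\chi'$ is small. There the positive $\delta^{-1}$ term does not dominate, but one is close to an original metric and only small perturbations are present. I would handle this in two steps. First, choose the extensions $h^\pm$ by Taylor expansion so that $\widetilde h$ agrees with $h$ to first order at $s=\pm\delta$. Second, use the strict positivity $\Ric_{g_i}\ge c>0$ on compact neighbourhoods to absorb errors of size $O(\delta)+o(1)\cdot\chi'$. The difficulty is the mixed components $R(\partial_s,X)$: they are $O(1)$, and positive definiteness requires bounding $R_{sX}^2$ by $R_{ss}R_{XX}$. This holds where $\chi'\sim1$ (there $R_{ss},R_{XX}\gtrsim\delta^{-1}$), and it holds near the ends because the metric is $C^1$-close to the original one.

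Afterwards, a final $C^2$-small correction supported in $|s|<2\delta$ removes any residual loss, using the open condition $\Ric>0$. The conclusions about support and $C^0$ closeness then follow by letting $\delta\to0$, since the metric is unchanged outside $|s|<\delta$ and $|\widetilde h-h|=O(\delta)$.
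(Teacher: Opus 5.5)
Your Fermi-coordinate bookkeeping is correct. You have $\partial_sh|_{0^+}=-2\II_{\partial M_1}$ and $\partial_sh|_{0^-}=2\II_{\partial M_2}$, and the $\delta^{-1}$ part of $-\tfrac12\partial_s^2\widetilde h$ has the good sign in the $ss$ and tangential blocks. The step that fails is matching with $g$. Because you anchor $\widetilde h_0=h_0$, your $\widetilde h$ does not rejoin $h$ at $s=\pm\delta$. For $s\ge\delta$ your formula gives exactly
\begin{align*}
 \widetilde h_s-h_s=\int_0^\delta\bigl(1-\chi(\sigma/\delta)\bigr)\bigl(\partial_\sigma h^{-}-\partial_\sigma h^{+}\bigr)\dd\sigma
 =2\delta\Bigl(\int_0^1(1-\chi)\Bigr)\bigl(\II_{\partial M_1}+\II_{\partial M_2}\bigr)+O(\delta^2),
\end{align*}
and symmetrically for $s\le-\delta$ with $\int_{-1}^0\chi$. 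This offset is positive definite. So the metric is not unchanged outside $|s|<\delta$; it equals $h+\delta P^\pm(x)$ on all of $|s|\ge\delta$.

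None of your proposed remedies removes this offset:
\begin{itemize}
\item Choosing $h^\pm$ by Taylor expansion matches $s$-derivatives, not values.
\item No step function with $0\le\chi\le1$ makes these integrals vanish. An overshooting $\chi$ has $\chi'<0$ somewhere, which flips the sign of the $\delta^{-1}$ term.
\item Your closing ``$C^2$-small correction in $|s|<2\delta$'' cannot remove it either. Removing an $O(\delta)$ offset over a width $O(\delta)$ needs $\partial_s^2$ of size $\delta^{-1}$, with the bad sign on part of the interval, where no favourable term is present.
\end{itemize}
Geometrically, smoothing a concave corner must cut the corner, so $\widetilde h_0$ has to drop below $h_0$ by order $\delta$.

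The gap can be repaired in either of two ways.
\begin{enumerate}
\item[(a)] Keep your $\widetilde h$ and taper the offset on $\delta\le|s|\le w$ as $h+\delta\,\eta(s/w)P^\pm(x)$. Here $\eta\equiv1$ near $0$ and $\delta/w^2\to0$, for instance $w=\delta^{1/3}$. This perturbation is $O(\delta^{1/3})$ in $C^2$ and is absorbed by $\Ric_{g_i}\ge c>0$.
\item[(b)] Integrate from $s=-\delta$ instead of $0$, and take $\chi(u)+\chi(-u)=1$, so that the $O(\delta)$ mismatch at $s=\delta$ cancels. Remove the remaining $O(\delta^2)$ by adding $\delta\,\beta(\sigma/\delta)Q(x)$ to the integrand. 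Here $\int\beta=1$, $\beta$ is supported where $\chi'\ge c>0$, and $Q=O(1)$. This changes $\partial_s^2\widetilde h$ only by $O(1)$.
\end{enumerate}
In either version, apart from the favourable $\delta^{-1}\chi'$ term, $\Ric_{\widetilde g}$ differs from $\Ric_{g^+}$ by $O(1-\chi)+O(\delta)$, and from $\Ric_{g^-}$ by $O(\chi)+O(\delta)$. These errors come from $\partial_s\widetilde h-\partial_sh^\pm$; they are not $o(1)\chi'$ as you wrote.

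The correct dichotomy is therefore the following. Where $\chi$ or $1-\chi$ is small, use $\Ric\ge c$ of the smooth extension $g^\pm$. Elsewhere $\chi'$ is bounded below, and the block-diagonal $\delta^{-1}$ term dominates everything, including $R_{sX}$. With this fix you get a self-contained proof of the strictly convex case, which is all the paper uses. The paper itself gives no argument: it cites Perelman's gluing theorem in the form of Botvinnik--Walsh--Wraith, Theorem~2, and reads the localization and $C^0$-closeness off their construction.
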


\noindent
This is Perelman's gluing theorem; see
\cite[Theorem~2 and its proof in Section~2.2]
{BotvinnikWalshWraith2019}.
The arbitrary localization and $C^0$-approximation follow from
equation~(2-1), Proposition~8, and the final smoothing on p.~3020
of that paper.

\end{document}